\newtheorem{corollary}{Corollary}[section]
\newtheorem{maintheorem}[corollary]{Theorem}
\newtheorem{theorem}[corollary]{Theorem}
\newtheorem{proposition}[corollary]{Proposition}
\newtheorem{definition}[corollary]{Definition}
\newtheorem{lemma}[corollary]{Lemma}
\newtheorem{claim}[corollary]{Claim}
\newtheorem{conjecture}[corollary]{Conjecture}
\newtheorem{problem}[corollary]{Problem}
\theoremstyle{definition}
\newcommand{\FormatAuthor}[3]{
\begin{tabular}{c}
#1 \\ {\small\texttt{#2}} \\ {\small #3}
\end{tabular}
}
\newcommand{\defemph}[1]{\textbf{\emph{#1}}}
\newcommand{\colored}{}
\newcommand{\DefineEqual}{\colored{:=}}
\newcommand{\AC}{\mathit{AC}}
\newcommand{\Bits}{\colored{\{0,1\}}}
\newcommand{\Absolute}[1]{\colored{\left|#1\right|}}
\newcommand\abs[1]{\Absolute{#1}}
\newcommand{\HC}{\mathcal{H}}
\newcommand{\ceil}[1]{\colored{{\left\lceil#1\right\rceil}}}
\newcommand{\dist}{\colored{{\sf dist}}}
\newcommand{\N}{{\mathbb N}}
\newcommand{\E}{{\mathbb E}}
\newcommand{\seq}{\subseteq}
\newcommand{\half}{\frac{1}{2}}
\newcommand{\rk}{rk}
\renewcommand{\int}{{\sf int}}
\newcommand{\D}{\mathcal{D}}
\renewcommand{\L}{\mathcal{L}}
\renewcommand{\Pr}{\mathbb{P}}
\newcommand{\maj}{{\sf maj}}
\newcommand{\Bin}{\mathrm{Bin}}
\newcommand{\avgstretch}{{\sf avgStretch}}
\newcommand{\recmaj}[1][k]{{\sf rec\text{-}maj}_{#1}}
\newcommand{\Arecmaj}[1][k]{{A_{\recmaj[#1]}}}
\newcommand{\Zrecmaj}[1][k]{{Z_{\recmaj[#1]}}}
\newcommand{\tribes}{{\sf tribes}}
\newcommand{\Atribes}{{A_{\tribes}}}
\newcommand{\Ztribes}{{Z_{\tribes}}}
\newcommand{\Astribes}{{A^*_{\tribes}}}
\newcommand{\rand}{{\sf rand}}
\newcommand{\mineq}{{\sf \nu}}
\newcommand{\third}{{r}}
\begin{document}

\title{On Mappings on the Hypercube \\ with Small Average Stretch}
\author{
\begin{tabular}[h!]{ccc}
   \FormatAuthor{Lucas Boczkowski}{lucasboczko@gmail.com}{CNRS, IRIF Universit\'{e} Paris 7}
 & \FormatAuthor{Igor Shinkar}{ishinkar@sfu.ca}{Simon Fraser University}
\end{tabular}
} %
\date{}

\maketitle

\begin{abstract}
Let $A \seq \Bits^n$ be a set of size $2^{n-1}$, and let $\phi \colon \Bits^{n-1} \to A$ be a bijection. We define \emph{the average stretch} of $\phi$ as
\begin{equation*}
    \avgstretch(\phi) = \E[\dist(\phi(x),\phi(x'))],
\end{equation*}
where the expectation is taken over uniformly random $x,x' \in \Bits^{n-1}$ that differ in exactly one coordinate.

In this paper we continue the line of research
studying mappings on the discrete hypercube with small average stretch.
We prove the following results.
\begin{itemize}
    \item For any set $A \seq \Bits^n$ of density $1/2$ there exists a bijection $\phi_A \colon \Bits^{n-1} \to A$ such that $\avgstretch(\phi_A) = O(\sqrt{n})$.
    \item For $n = 3^k$ let $\Arecmaj[] = \{x \in \Bits^n : \recmaj[](x) = 1\}$, where $\recmaj[] : \Bits^n \to \Bits$ is the function \emph{recursive majority of 3's}.
    There exists a bijection $\phi_{\recmaj[]} \colon \Bits^{n-1} \to \Arecmaj[]$ such that $\avgstretch(\phi_{\recmaj[]}) = O(1)$.
    \item Let $\Atribes = \{x \in \Bits^n : \tribes(x) = 1\}$.
    There exists a bijection $\phi_{\tribes} \colon \Bits^{n-1} \to \Atribes$ such that $\avgstretch(\phi_{\tribes}) = O(\log(n))$.
\end{itemize}
These results answer the questions raised by Benjamini, Cohen, and Shinkar (Isr. J. Math 2016).
\end{abstract}
\clearpage
{\small \tableofcontents}
\clearpage
\section{Introduction}
\label{sec:introduction}

In this paper we continue the line of research from \cite{BCS, RS18, JS21}
studying geometric similarities between different subsets of the hypercube $\HC_n = \Bits^n$.
Given a set $A \seq \HC_n$ of size $\abs{A} = 2^{n-1}$ and a bijection $\phi \colon \HC_{n-1} \to A$, we define \defemph{the average stretch} of $\phi$ as
\begin{equation*}
    \avgstretch(\phi) = \E_{x \sim x' \in \HC_{n-1}}[\dist(\phi(x),\phi(x'))],
\end{equation*}
where $\dist(x,y)$ is defined as the number of coordinates $i \in [n]$ such that $x_i \neq y_i$, and the expectation is taken over a uniformly random $x,x' \in \HC_{n-1}$ that differ in exactly one coordinate.%
\footnote{Note that any $C$-Lipschitz function $\phi \colon \HC_{n-1} \to A$ satisfies $\avgstretch(\phi) \leq C$. That is, the notion of average stretch is a relaxation of the Lipschitz property.}

The origin of this notion is motivated by the study of the complexity of distributions~\cite{GGN10, Vi12, LV12}.
In this line of research, given a distribution $\D$ on $\HC_n$ the goal is to find a mapping $h \colon \HC_m \to \HC_n$
such that if $U_m$ is the uniform distribution over $\HC_m$, then $h(U_m)$ is (close to) the distribution $\D$,
and each output bit $h_i$ of the function $h$ is computable efficiently (e.g., computable in $\AC_0$, i.e., by polynomial size circuits of constant depth).

Motivated by the goal of proving lower bounds for sampling from the uniform distribution on some set $A \seq \HC_n$, Lovett and Viola~\cite{LV12} suggested the restricted problem of proving that no bijection from $\HC_{n-1}$ to $A$ can be computed in $\AC_0$. Toward this goal they noted that it suffices to prove that any such bijection requires large average stretch. Indeed, by the structural results of~\cite{Hastad86, Bop97, LMN93} it is known that any such mapping $\phi$ that is computable by a polynomial size circuit of depth $d$ has $\avgstretch(\phi) < \log(n)^{O(d)}$, and hence proving that
any bijection requires super-polylogarithmic average stretch implies that it cannot be computed in $\AC_0$.
Proving a lower bound for sampling using this approach remains an open problem.

Studying this problem, \cite{BCS} have shown that if $n$ is odd, and $A_{\maj} \seq \HC_n$ is the hamming ball of density $1/2$, i.e. $A_{\maj} = \{x \in \HC_n : \sum_i x_i > n/2\}$, then there is a $O(1)$-bi-Lipschitz mapping from $\HC_{n-1}$ to $A_{\maj}$, thus suggesting that proving a lower bound for a bijection from $\HC_{n-1}$ to $A_{\maj}$ requires  new ideas beyond the sensitivity-based structural results of~\cite{Hastad86, Bop97, LMN93} mentioned above. In \cite{RS18} it has been shown that if a subset $A_{\rand}$ of density $1/2$ is chosen uniformly at random, then with high probability there is a bijection $\phi \colon \HC_{n-1} \to A_{\rand}$ with $\avgstretch(\phi) = O(1)$.
This result has recently been improved by Johnston and Scott \cite{JS21}, who showed that for a random set $A_{\rand} \seq \HC_n$ of density $1/2$ there exists a $O(1)$-Lipschitz bijection from $\HC_{n-1}$ to $A_{\rand}$ with high probability.

The following problem was posed in \cite{BCS}, and repeated in \cite{RS18,JS21}.

\begin{problem}\label{problem:avg-stretch}
Exhibit a subset $A \subseteq \HC_n$ of density $1/2$ such that
any bijection $\phi \colon \HC_{n-1} \to A$ has $\avgstretch(\phi) = \omega(1)$,
or prove that no such subset exists.%
\footnote{Throughout the paper, the density of a set $A \seq \HC_n$ is defined as $\mu_n(A) = \frac{\abs{A}}{2^n}$.}
\end{problem}

\paragraph{Remark}
  Note that it is easy to construct a set of density $1/2$
  such that any bijection $\phi \colon \HC_{n-1} \to A$ must have a \emph{worst case stretch} at least $n/2$.
  For example, for odd $n$ consider the set $A = \{y \in \HC_n : n/2 < \sum_i y_i < n\} \cup \{0^n\}$.
  Then any bijection $\phi \colon \HC_{n-1} \to A$ must map some point $x \in \HC_{n-1}$ to $0^n$, while all neighbours $x'$ of $x$
  are mapped to some $\phi(x')$ with weight at least $n/2$. Hence, the \emph{worst case stretch} of $\phi$ is at least $n/2$.
  In contrast, \cref{problem:avg-stretch} does not seem to have a non-trivial solution.

To rephrase \cref{problem:avg-stretch}, we are interested in determining a tight upper bound on the $\avgstretch$ that holds uniformly for all sets $A \seq \HC_n$ of density $1/2$.
Note that since the diameter of $\HC_n$ is $n$, for any set $A \seq \HC_n$ of density $1/2$ and any bijection $\phi \colon \HC_{n-1} \to A$ it holds that $\avgstretch(\phi) \leq n$. It is natural to ask how tight this bound is, i.e.,
whether there exists $A \seq \HC_n$ of density $1/2$ such that any bijection $\phi \colon \HC_{n-1} \to A$
requires linear average stretch.

It is consistent with our current knowledge (though hard to believe) that for any set $A$ of density $1/2$ there is a mapping $\phi \colon \HC_{n-1} \to A$ with $\avgstretch(\phi) \leq 2$.
The strongest lower bound we are aware of is for the set $A_{\oplus} = \{x \in \HC_n : \sum_{i}x_i \equiv 0 \pmod 2\}$.
Note that the distance between any two points in $A_{\oplus}$ is at least 2, and hence $\avgstretch(\phi) \geq 2$ for any mapping $\phi \colon \HC_{n-1} \to A_{\oplus}$.
Proving a lower bound strictly greater than 2 for any set $A$ is an open problem,
and prior to this work we are not aware of any sublinear upper bounds that apply uniformly to all sets.

Most of the research on metric embedding, we are aware of, focuses on worst case stretch.
For a survey on metric embeddings of finite spaces see~\cite{Li02}.
There has been a lot of research on the question of embedding into the Boolean cube.
For example, see~\cite{AB07,HLN87} for work on embeddings between random subsets of the Boolean cube,
and~\cite{G88} for isometric embeddings of arbitrary graphs into the Boolean cube.

\subsection{A uniform upper bound on the average stretch}\label{sec:our-results-sqrt-bound}
We prove a non-trivial uniform upper bound on the average stretch of
a mapping $\phi \colon \HC_{n-1} \to A$ that applies to all sets $A \seq \HC_n$ of density $1/2$.

\begin{maintheorem}\label{thm:sqrt-n-avg-stretch}
    For any set $A \seq \HC_n$ of density $\mu_n(A) = 1/2$,
    there exists a bijection $\phi \colon \HC_{n-1} \to A$ such that $\avgstretch(\phi) = O(\sqrt{n})$.
\end{maintheorem}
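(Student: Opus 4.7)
The plan is to reduce the problem to an optimal-transport estimate on the hypercube. Identify $\Bits^{n-1}$ with the subcube $X_0 := \{x \in \Bits^n : x_n = 0\}$ via $x \mapsto (x,0)$, and consider the $W_1$ Wasserstein transport (with Hamming cost) between the uniform probability measures $\mu$ on $X_0$ and $\nu$ on $A$. Since $\abs{X_0} = \abs{A} = 2^{n-1}$, the transportation LP is integral and its optimum is attained by an honest bijection $f \colon X_0 \to A$ satisfying $\E_{y \in X_0}[\dist(y, f(y))] = W_1(\mu,\nu)$. Define $\phi \colon \Bits^{n-1} \to A$ by $\phi(x) := f((x,0))$. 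A triangle inequality along a random edge of the hypercube gives
\[
    \avgstretch(\phi) \leq 1 + 2 \cdot \E_{x \in \Bits^{n-1}}[\dist((x,0), f((x,0)))] = 1 + 2\,W_1(\mu,\nu),
\]
so it suffices to prove $W_1(\mu,\nu) = O(\sqrt{n})$.

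To bound $W_1$, I would use Kantorovich-Rubinstein duality,
\[
    W_1(\mu,\nu) = \sup\bigl\{\E_\mu[g] - \E_\nu[g] : g\colon \Bits^n \to \R \textrm{ is $1$-Lipschitz in Hamming}\bigr\},
\]
together with the Poincar\'e inequality on the hypercube. Let $U$ denote the uniform distribution on $\Bits^n$ and write $\tilde\mu := 2\cdot \mathbf{1}_{X_0}$, $\tilde\nu := 2\cdot \mathbf{1}_A$ for the densities of $\mu,\nu$ with respect to $U$. Because $\E_U[\tilde\mu - \tilde\nu]=0$, for any $1$-Lipschitz $g$ we may center $g$ and apply Cauchy-Schwarz:
\[
    \E_\mu[g] - \E_\nu[g] = \E_U\bigl[(g - \E_U g)(\tilde\mu - \tilde\nu)\bigr] \leq \|g - \E_U g\|_{L^2(U)}\cdot \|\tilde\mu - \tilde\nu\|_{L^2(U)}.
\]
The first factor is at most $\sqrt{n}/2$ by the Poincar\'e inequality (each discrete derivative of a $1$-Lipschitz function has absolute value at most $1$, so $\Var_U(g) \leq n/4$). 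The second factor satisfies $\|\tilde\mu - \tilde\nu\|_{L^2(U)}^2 = 4\mu_n(X_0 \triangle A) \leq 4$. Multiplying gives $W_1(\mu,\nu) \leq \sqrt{n}$ and hence $\avgstretch(\phi) \leq 1 + 2\sqrt{n}$.

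Each ingredient above is standard, so there is no single hard step; the content of the argument is the observation that a spectral-gap/Poincar\'e estimate on the hypercube already controls the worst-case transport distance from $X_0$ to an arbitrary density-$1/2$ set, and that average stretch is dominated by average \emph{displacement} via the triangle inequality. The one subtlety worth flagging is the passage from an optimal coupling to a genuine bijection, which is the integrality of the bipartite matching LP (equivalently, the extreme points of the transportation polytope between two uniform measures on equal-size finite sets are permutation matrices, by Birkhoff-von Neumann).
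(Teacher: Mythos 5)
Your proof is correct, and the high-level architecture coincides with the paper's: identify $\HC_{n-1}$ with the subcube $X_0 = \{x : x_n = 0\}$, reduce to bounding the Wasserstein distance $W_1(\mu,\nu)$ between the uniform measures on $X_0$ and on $A$, upgrade an optimal coupling to a genuine bijection via Birkhoff--von Neumann (the paper's \cref{claim:W_1-bijection}), and convert average displacement into average stretch by the triangle inequality (the paper's \cref{prop:avg-dist-avg-stretch}). Where you genuinely diverge is the bound on $W_1$, which is the only nontrivial analytic step. The paper invokes a transportation--entropy ($T_1$) inequality from Raginsky--Sason, $W_1(\nu, \mu_n) \leq \sqrt{\tfrac{1}{2}n \cdot D(\nu\,\|\,\mu_n)}$, i.e.\ a tensorized Marton/Pinsker-type bound whose proof goes through subgaussian concentration (bounded differences). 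You instead pass to Kantorovich--Rubinstein duality and apply Cauchy--Schwarz together with the Poincar\'e inequality $\Var(g) \leq n/4$ for $1$-Lipschitz $g$ on $\HC_n$, yielding a bound in terms of the $\chi^2$-divergence $\|\tilde\mu - \tilde\nu\|_{L^2(U)}$ rather than the KL divergence. This is more self-contained --- the Poincar\'e inequality here is just Parseval plus $\sum_{S\neq\emptyset}\hat g(S)^2 \le \sum_S |S|\,\hat g(S)^2$, so nothing beyond elementary Fourier analysis is needed --- and it gives a marginally sharper constant ($\sqrt{n}$ versus the paper's $\sqrt{2n}$). For measures concentrated on very sparse sets the paper's KL-based bound would win, but at density $1/2$ the two are essentially interchangeable. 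Your argument is complete as stated.
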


Toward this goal we prove a stronger result bounding the average transportation distance between two arbitrary sets of density $1/2$.
Specifically, we prove the following theorem.
\begin{maintheorem}\label{thm:sqrtn-transportation-dist}
    For any two sets $A,B \seq \HC_n$ of density $\mu_n(A) = \mu_n(B) = 1/2$,
    there exists a bijection $\phi \colon A \to B$ such that $\E[\dist(x,\phi(x))] \leq \sqrt{2n}$.
\end{maintheorem}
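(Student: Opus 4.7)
The plan is to translate the question into bounding the $1$-Wasserstein transportation distance $W_1(\mu_A,\mu_B)$ between the uniform measures $\mu_A,\mu_B$ on $A$ and $B$, and to estimate that distance via Kantorovich-Rubinstein duality combined with a variance bound for Lipschitz functions on the cube. The reduction to $W_1$ uses Birkhoff-von Neumann: since $|A|=|B|=2^{n-1}$, every coupling of $\mu_A$ with $\mu_B$ corresponds (after scaling by $|A|$) to a doubly stochastic matrix on $A\times B$, and the polytope of such matrices has the permutation matrices as its vertices. The objective $\E_\pi[\dist(x,y)]$ is linear in the coupling, so its infimum over couplings is attained at a vertex, i.e. at a bijection $\phi:A\to B$. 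Thus it suffices to prove $W_1(\mu_A,\mu_B)\leq\sqrt{2n}$.

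To bound $W_1$, I would apply Kantorovich-Rubinstein duality:
$$W_1(\mu_A,\mu_B)\ =\ \sup_{f\text{ $1$-Lipschitz}}\bigl(\E_{\mu_A}f - \E_{\mu_B}f\bigr).$$
Let $\mu$ denote the uniform measure on $\HC_n$ and set $g=\mathbf{1}_A-\mathbf{1}_B$. Using $|A|=|B|=2^{n-1}$ together with $\E_\mu g=0$, for any test function $f$ one has
$$\E_{\mu_A}f - \E_{\mu_B}f\ =\ 2\,\E_\mu[g\cdot f]\ =\ 2\,\E_\mu\bigl[g\cdot(f-\E_\mu f)\bigr].$$
By Cauchy-Schwarz this is at most $2\,\|g\|_2\cdot\|f-\E_\mu f\|_2$. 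Now $\|g\|_2^2=\mu(A\triangle B)\leq 1$ since $A$ and $B$ both have density $1/2$, and the Efron-Stein inequality (applied coordinate-wise) gives $\Var_\mu(f)\leq n/4$ for every $1$-Lipschitz $f$ on $\HC_n$, so $\|f-\E_\mu f\|_2\leq \tfrac{1}{2}\sqrt{n}$. Multiplying the two factors yields $|\E_{\mu_A}f-\E_{\mu_B}f|\leq \sqrt{n}$, which is even slightly sharper than the claimed $\sqrt{2n}$.

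The argument is entirely soft, and the only step that could trip one up is the reduction to a genuine bijection (as opposed to a fractional transport plan), which is handled cleanly by the integrality of the transportation LP via Birkhoff-von Neumann. Everything else is just Cauchy-Schwarz plus the classical variance bound. If one wanted a proof avoiding LP duality—for instance in order to describe $\phi$ more explicitly—I would instead try a direct flow-based construction routing each ``excess'' point of $A\setminus B$ through hypercube edges to a deficit point of $B\setminus A$ and bound the average routing length by a standard isoperimetric estimate; but the Kantorovich-Rubinstein route above is the cleanest path to the stated bound.
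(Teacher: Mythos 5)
Your proposal is correct, and while the first half (reduction to $W_1(\mu_A,\mu_B)$ via Birkhoff--von Neumann) is identical to the paper's argument, the second half takes a genuinely different and in fact slightly sharper route. The paper bounds $W_1(\mu_A,\mu_B)$ by first invoking a transportation--entropy inequality of Marton/Pinsker type, namely $W_1(\nu,\mu_n)\leq\sqrt{\tfrac{n}{2}\,D(\nu\|\mu_n)}$, evaluates the relative entropy $D(\mu_A\|\mu_n)=1$ for a set of density $1/2$, and then applies the triangle inequality $W_1(\mu_A,\mu_B)\leq W_1(\mu_A,\mu_n)+W_1(\mu_n,\mu_B)\leq 2\sqrt{n/2}=\sqrt{2n}$. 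You instead pass to Kantorovich--Rubinstein duality, write the difference of means of any $1$-Lipschitz test function $f$ as $2\,\E_\mu[g(f-\E_\mu f)]$ with $g=\mathbf{1}_A-\mathbf{1}_B$, and apply Cauchy--Schwarz together with the Poincar\'e/Efron--Stein bound $\Var_\mu(f)\leq n/4$. This attacks $W_1(\mu_A,\mu_B)$ directly rather than detouring through $\mu_n$, which is exactly where the factor $\sqrt{2}$ saving comes from: your final bound is $\sqrt{n}$ rather than $\sqrt{2n}$. (One small remark: the step $\|g\|_2^2=\mu(A\triangle B)\leq 1$ uses only $\mu(A)+\mu(B)=1$, not disjointness, and your identity $\E_{\mu_A}f-\E_{\mu_B}f=2\,\E_\mu[gf]$ relies on $|A|=|B|=2^{n-1}$; both are fine as stated.) Your approach has the further virtue of being entirely $L^2$-elementary --- it does not require relative entropy or the transportation-cost inequality machinery that the paper cites from the Raginsky--Sason monograph --- and would be the natural one to record if one wants the optimal constant.
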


Note that \cref{thm:sqrt-n-avg-stretch} follows immediately from \cref{thm:sqrtn-transportation-dist} by the following simple argument.
\begin{proposition}\label{prop:avg-dist-avg-stretch}
Fix a bijection $\phi \colon \HC_{n-1} \to A$.
Then $\avgstretch(\phi) \leq 2\E_{x \in \HC_{n-1}}[\dist(x,\phi(x))] + 1$.
\end{proposition}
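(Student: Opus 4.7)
The plan is to prove this by a one-line application of the triangle inequality on each edge of $\HC_{n-1}$, followed by linearity of expectation. I would first identify $\HC_{n-1}$ with a fixed half of $\HC_n$ (say, the subcube $\{x \in \HC_n : x_n = 0\}$), so that the Hamming distance $\dist(x,\phi(x))$ between $x \in \HC_{n-1}$ and $\phi(x) \in A \seq \HC_n$ is well-defined; the expectation in the statement is then interpreted as an expectation over this embedded copy of $\HC_{n-1}$.

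Fix any pair $x,x' \in \HC_{n-1}$ that differ in exactly one coordinate, so that $\dist(x,x') = 1$. The triangle inequality for the Hamming metric on $\HC_n$ yields
$$\dist(\phi(x),\phi(x')) \leq \dist(\phi(x),x) + \dist(x,x') + \dist(x',\phi(x')) = \dist(x,\phi(x)) + \dist(x',\phi(x')) + 1.$$
I would then take expectation over a uniformly random edge $(x,x')$ in $\HC_{n-1}$. The key observation is that each of the two endpoints of such a random edge is marginally uniformly distributed on $\HC_{n-1}$, so by linearity of expectation
$$\avgstretch(\phi) = \E_{x \sim x'}[\dist(\phi(x),\phi(x'))] \leq 2\,\E_{x \in \HC_{n-1}}[\dist(x,\phi(x))] + 1,$$
which matches the stated bound under the identification above.

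There is no substantive obstacle here: the whole argument is really the one-line triangle inequality above, combined with marginal uniformity of the endpoints of a uniformly random edge in $\HC_{n-1}$. The role of this proposition in the paper is to reduce \cref{thm:sqrt-n-avg-stretch} to \cref{thm:sqrtn-transportation-dist}, which one would then apply with $A$ in that theorem taken to be the subcube $\HC_{n-1} \hookrightarrow \HC_n$ and $B$ taken to be the target set of density $1/2$.
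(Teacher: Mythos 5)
Your proof is correct and is essentially the same as the paper's: both apply the triangle inequality edgewise and then use linearity of expectation together with the marginal uniformity of each endpoint of a random edge. You are also right to read the expectation in the statement as being over the embedded copy of $\HC_{n-1}$ (the paper's subscript $x \in \HC_n$ is a minor typo; its own proof averages over $x \sim \HC_{n-1}$).
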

\begin{proof} Using the triangle inequality we have
    \begin{eqnarray*}
    \avgstretch(\phi) & = & \E_{\substack{x \in \HC_{n-1} \\ i \in  [n-1]}}[\dist(\phi(x),\phi(x+e_i))] \\
      & \leq & \E[\dist(x, \phi(x)) + \dist(x, x+e_i) + \dist(x+e_i,\phi(x+e_i))] \\
      & = & \E[\dist(x,\phi(x))] + 1 + \E[\dist(x+e_i,\phi(x+e_i))] \\
      & = & 2\E[\dist(x,\phi(x))] + 1,
    \end{eqnarray*}
    as required.
\end{proof}

\subsection{Bounds on the average stretch for specific sets}\label{sec:our-results-specific-sets}

Next, we study two specific subsets of $\HC_n$ defined by Boolean functions commonly studied in the field ``Analysis of Boolean functions'' \cite{OD14}.
Specifically, we study two monotone noise-sensitive functions: the \emph{recursive majority of 3's}, and the \emph{tribes} function.

It was suggested in \cite{BCS} that the set of ones of these functions $A_f = f^{-1}(1)$ may be
such that any mapping $\phi \colon \HC_{n-1} \to A_f$ requires large $\avgstretch$.
We show that for the recursive majority function there is a mapping $\phi_{\recmaj[]} \colon \HC_{n-1} \to \recmaj[]^{-1}(1)$
with $\avgstretch(\phi_{\recmaj[]}) = O(1)$.
For the tribes function we show a mapping $\phi_{\tribes} \colon \HC_{n-1} \to \tribes^{-1}(1)$
with $\avgstretch(\phi_{\tribes}) = O(\log(n))$.
Below we formally define the functions, and discuss our results.

\subsubsection{Recursive majority of 3's}\label{sec:recmaj-def}
The recursive majority of 3's function is defined as follows.
\begin{definition}
    Let $k \in \N$ be a positive integer.
    Define the function \defemph{recursive majority of 3's} $\recmaj \colon \HC_{3^k} \to \Bits$ as follows.
    \begin{itemize}
      \item
    For $k = 1$ the function $\recmaj[1]$ is the majority function on the $3$ input bits.
      \item
    For $k > 1$ the function $\recmaj \colon \HC_{3^k} \to \Bits$ is defined recursively as follows.
    For each $x \in \HC_{3^k}$ write $x = x^{(1)} \circ x^{(2)} \circ x^{(3)}$, where each $x^{(\third)} \in \HC_{3^{k-1}}$ for each $\third \in [3]$.
    Then, $\recmaj(x) = \maj(\recmaj[k-1](x^{(1)}), \recmaj[k-1](x^{(2)}), \recmaj[k-1](x^{(3)}))$.

    \end{itemize}
\end{definition}

Note that $\recmaj(x) = 1-\recmaj({\bf 1}-x)$ for all $x \in \HC_n$,
and hence the density of the set $\Arecmaj = \{x \in \HC_{n} : \recmaj(x) = 1\}$ is
$\mu_n(\Arecmaj) = 1/2$.
We prove the following result regarding the set $\Arecmaj$.

\begin{maintheorem}\label{thm:recmaj-avgstretch}
    Let $k$ be a positive integer, let $n = 3^k$, and let $\Arecmaj = \{x \in \HC_{n} : \recmaj(x) = 1\}$.
    There exists a mapping $\phi_{\recmaj} \colon \HC_{n-1} \to \Arecmaj$ such that $\avgstretch(\phi_{\recmaj}) \leq 20$.
\end{maintheorem}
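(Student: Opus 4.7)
The plan is to prove the theorem by induction on $k$ with a strengthened hypothesis. Along with $\phi_{\recmaj[k]}\colon \HC_{3^k-1}\to\Arecmaj$, I simultaneously construct a companion bijection
\[
\bar\phi_{\recmaj[k]}\colon \HC_{3^k-1}\to \HC_{3^k}\setminus\Arecmaj
\]
and track the pairing distance
\[
D_k \;:=\; \E_{y\in\HC_{3^k-1}}\bigl[\dist(\phi_{\recmaj[k]}(y),\bar\phi_{\recmaj[k]}(y))\bigr].
\]
The inductive invariant is that both maps have average stretch at most $20$ and that $D_k\le (3/2)^k D_1$. The base case $k=1$ is finite: $\recmaj[1]=\maj$ on three bits, $|\Arecmaj[1]|=4$, and both bijections can be exhibited explicitly on the $8$-element cube with small $\avgstretch$ and $D_1$.

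For the inductive step, set $A := A_{\recmaj[k-1]}$ and note that $\Arecmaj$ splits into four ``configurations''
\[
A\times A\times A,\quad A^c\times A\times A,\quad A\times A^c\times A,\quad A\times A\times A^c,
\]
each of size $2^{3^k-3}$. I decompose the domain as $\HC_{3^k-1}\cong \HC_{3^{k-1}-1}^{\,3}\times \Bits^2$, write $y=(y_1,y_2,y_3,b_1,b_2)$, and set $u_i:=\phi_{\recmaj[k-1]}(y_i)$, $\bar u_i:=\bar\phi_{\recmaj[k-1]}(y_i)$. The four values of $(b_1,b_2)$ select the four configurations: I place $u_i$ in blocks whose sub-majority is $1$ and $\bar u_i$ in the single block (if any) whose sub-majority is $0$. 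The companion $\bar\phi_{\recmaj[k]}$ is defined symmetrically using the four $A^c$-configurations $A^c\times A^c\times A^c,\;A\times A^c\times A^c,\;A^c\times A\times A^c,\;A^c\times A^c\times A$.

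The crucial design choice is the pairing of $A$- with $A^c$-configurations across the two maps at each $(b_1,b_2)$. I use
\[
(1,1,1)\leftrightarrow(1,0,0),\quad (0,1,1)\leftrightarrow(0,1,0),\quad (1,0,1)\leftrightarrow(0,0,1),\quad (1,1,0)\leftrightarrow(0,0,0),
\]
so that the paired configurations differ in $2,1,1,2$ blocks, an average of $3/2$ blocks, yielding $D_k \le \tfrac{3}{2}D_{k-1}$. For the stretch analysis, edges internal to some $y_i$ (a $\tfrac{3(3^{k-1}-1)}{3^k-1}$ fraction of all domain edges) change only one image block and contribute $\avgstretch(\phi_{\recmaj[k-1]})$ on average, since $\phi_{\recmaj[k-1]}$ and $\bar\phi_{\recmaj[k-1]}$ have the same average stretch by symmetry. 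The remaining $\tfrac{2}{3^k-1}$ fraction of edges flip $b_1$ or $b_2$, and under the chosen pairing each costs $\tfrac{3}{2}D_{k-1}$ on average. Combining,
\[
\avgstretch(\phi_{\recmaj[k]}) \;\le\; \avgstretch(\phi_{\recmaj[k-1]}) \;+\; \frac{3D_{k-1}}{3^k-1} \;\le\; \avgstretch(\phi_{\recmaj[k-1]}) \;+\; \frac{D_1}{2^{k-1}},
\]
and the same bound for $\bar\phi_{\recmaj[k]}$ by symmetry.

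The main obstacle is engineering the configuration pairing. A naive pairing (e.g., matching each $A$-configuration to its bitwise complement) yields $D_k=3D_{k-1}$, so the excess $\tfrac{3D_{k-1}}{3^k-1}$ stays bounded away from zero and the recursion collapses to $\avgstretch=\Theta(\log n)$. The factor $3/2$ delivered by the pairing above is exactly the threshold below which $\tfrac{3D_{k-1}}{3^k-1}=O(2^{-k})$ is summable across levels; combined with the inductive bound on $\avgstretch(\phi_{\recmaj[k-1]})$, the induction then closes with $\avgstretch(\phi_{\recmaj[k]})\le 20$ comfortably.
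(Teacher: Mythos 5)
Your approach is correct and genuinely different from the paper's. The paper first constructs a $2$-to-$1$ ``folding'' map $f_k \colon \HC_n \to \Arecmaj$ (identity on $\Arecmaj$, a bijection from $\Zrecmaj$ to $\Arecmaj$), proves $\E[\dist(f_k(x),f_k(x+e_i))]\le 10$ by conditioning on the four joint $\recmaj$-values of $x$ and $x+e_i$, and then extracts a bijection $\HC_{n-1}\to\Arecmaj$ non-constructively via a $2$-regular bipartite graph and a perfect-matching argument. You instead build the bijection directly and recursively, using the identification $\HC_{3^k-1}\cong\HC_{3^{k-1}-1}^3\times\Bits^2$ and the decomposition of $\Arecmaj$ into four product blocks $A^3, A^c\!\times\!A\!\times\!A, A\!\times\!A^c\!\times\!A, A\!\times\!A\!\times\!A^c$, together with a companion bijection onto $\Zrecmaj$ and a tracked companion distance $D_k$. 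Interestingly, the $(3/2)^k$ quantity is the common engine in both proofs: the paper's $\E[\dist(x,f_k(x))\mid\recmaj(x)=0]=1.5^k$ plays exactly the role of your $D_k$, and in both arguments the geometric decay $\sum_j 1.5^j2^{-j}$ is what keeps the stretch bounded. Your construction buys an explicit bijection with no matching step, at the cost of having to juggle two bijections and a pairing simultaneously; the paper's buys simpler case analysis at the cost of an existential step. A few small things you should tighten if you write this up: (i) the $3/2$ average for flips of $b_1,b_2$ comes from the intrinsic distances among the four $A$-configurations on $\Bits^2$ and holds for \emph{any} arrangement, not from the $A$/$A^c$ pairing — the pairing is what controls $D_k$, a separate quantity; (ii) the internal-edge contribution is actually $\frac34\avgstretch(\phi_{\recmaj[k-1]})+\frac14\avgstretch(\bar\phi_{\recmaj[k-1]})$ and the symmetric version for $\bar\phi$, so you should either fix a symmetric base case to keep the two stretches equal, or simply track $\max\bigl(\avgstretch(\phi),\avgstretch(\bar\phi)\bigr)$ in the induction; (iii) the recursion gives $D_k=(3/2)^{k-1}D_1$, not $(3/2)^k D_1$, though the off-by-one is harmless. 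With $D_1\le 3$ and a base stretch at most $3$, your recursion closes well under the stated bound of $20$.
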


\subsubsection{The tribes function}\label{sec:tribes-def}
The tribes function is defined as follows.
\begin{definition}
    Let $s, w \in \N$ be two positive integers, and let $n = s \cdot w$.
    The function $\tribes \colon \HC_n \to \Bits$ is defined as a DNF consisting of $s$ disjoint clauses of width $w$.
    \begin{equation*}
        \tribes(x_1,x_2,\dots,x_w; \dots ; x_{(s-1)w+1}\dots x_{sw})
        = \bigvee_{i=1}^s
        (x_{(i-1)w+1} \wedge x_{(i-1)w+2} \wedge \dots \wedge x_{iw}).
    \end{equation*}
    That is, the function $\tribes$ partitions $n = sw$ inputs into $s$ disjoint ``tribes'' each of size $w$,
    and returns 1 if and only if at least one of the tribes ``votes'' 1 unanimously.
\end{definition}

It is clear that $\Pr_{x \in \HC_n}[\tribes(x) = 1] = 1 - (1-2^{-w})^s$.
The interesting settings of parameters $w$ and $s$ are such that the function is close to balanced, i.e., this probability is close to $1/2$.
Given $w \in \N$, let $s = s_w = \ln(2) 2^w \pm \Theta(1)$ be the largest integer such that $1 - (1-2^{-w})^s \leq 1/2$.
For such choice of the parameters we have $\Pr_{x \in \HC_n}[\tribes(x) = 1] = \half - O\left(\frac{\log(n)}{n}\right)$ (see, e.g., \cite[Section 4.2]{OD14}).

Consider the set $\Atribes = \{x \in \HC_n : \tribes(x) = 1\}$.
Since the density of $\Atribes$ is not necessarily equal to $1/2$, we cannot talk about a bijection from $\HC_{n-1}$ to $\Atribes$.
In order to overcome this technical issue, let $\Astribes$ be an arbitrary superset of $\Atribes$ of density $1/2$.
We prove that there is a mapping $\phi_\tribes$ from $\HC_{n-1}$ to $\Astribes$ with average stretch $\avgstretch(\phi_\tribes) = O(\log(n))$.
In fact, we prove a stronger result, namely that the average transportation distance of $\phi_\tribes$ is $O(\log(n))$.

\begin{maintheorem}\label{thm:tribes-avgstretch}
    Let $w$ be a positive integer, and let $s$ be the largest integer such that $1 - (1-2^{-w})^s \leq 1/2$.
    For $n=s \cdot w$ let $\tribes \colon \HC_n \to \Bits$ be defined as a DNF consisting of $s$ disjoint clauses of width $w$.
    Let $\Atribes = \{x \in \HC_n : \tribes(x) = 1\}$, and
    let $\Astribes \seq \HC_n$ be an arbitrary superset of $\Atribes$ of density $\mu_n(\Astribes) = 1/2$.
    Then, there exists a bijection $\phi_{\tribes} \colon \HC_{n-1} \to \Astribes$ such that
    $\E[\dist(x,\phi_{\tribes}(x))] = O(\log(n))$.
    In particular, $\avgstretch(\phi_\tribes) = O(\log(n))$.
\end{maintheorem}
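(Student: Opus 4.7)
The plan is to view $\HC_{n-1}$ as the half-hypercube $E \DefineEqual \{x \in \HC_n : x_n = 0\}$ and to construct an explicit bijection $\phi \colon E \to \Astribes$ satisfying $\E_{x \in E}[\dist(x,\phi(x))] = O(\log n)$. The key structural feature of $\tribes$ that drives the $O(\log n)$ bound is that every $x \in \HC_n$ lies within Hamming distance $w = O(\log n)$ of $\Atribes$: filling in the zeros of any single tribe of $x$ produces an element of $\Atribes$ at cost at most $w$.

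Given this, I would set $\phi(x) = x$ on the large intersection $E \cap \Astribes$ (which has density $\tfrac{1}{2} - O(\log n / n)$ inside $E$, contributing zero cost), so that the nontrivial task reduces to constructing a bijection from $E \setminus \Astribes$ to $\Astribes \setminus E$ with small average cost. The natural local modification on this piece is to flip the last coordinate (which forces the image into $\HC_n \setminus E$) and to ``activate'' one canonical tribe $T_{i^*(x)}$ of $x$ by filling its zeros with ones, yielding an element of $\Atribes \setminus E$ at Hamming cost at most $1 + w$. A discrepancy between $|\Atribes|$ and $|\Astribes|$ of order $O(2^n \log n / n)$ is accommodated by absorbing the ``extra'' piece $\Astribes \setminus \Atribes$ into the matching on a negligibly small sub-collection of $x$'s.

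The main technical obstacle will be making this local modification an injection, since naive ``fill the tribe'' loses the original contents of $T_{i^*}(x)$. I would handle this either by (i) a direct combinatorial encoding in which $i^*(\phi(x))$ is recoverable as the first all-ones tribe of $\phi(x)$, and the original pattern $T_{i^*}(x)$ is encoded canonically within the remaining coordinates of $\phi(x)$; or (ii) by a Hall's theorem argument on the bipartite graph between $E$ and $\Astribes$ with edges at Hamming distance at most $w+1$. The verification of Hall's condition reduces to a ball-growth estimate: for every $S \seq E$, the $(w+1)$-neighbourhood of $S$ must intersect $\Astribes$ in at least $|S|$ points, which should follow from the fact that each $x$ already has $s = \Theta(n / \log n)$ neighbours in $\Atribes$ obtained from the $s$ possible tribe activations, combined with $\mu_n(\Astribes) = 1/2$.

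The hardest part is establishing the existence of the bijection rather than bounding its cost. Once bijectivity is secured, the average-cost bound $O(\log n)$ is immediate: only the $(\tfrac{1}{2} + o(1))$-fraction $E \setminus \Astribes$ of $E$ needs to move at all, and each move costs at most $1 + w = O(\log n)$, so $\E_{x \in E}[\dist(x,\phi(x))] \leq \tfrac{1}{2}(1 + w) + o(1) = O(\log n)$, which in particular gives $\avgstretch(\phi_\tribes) = O(\log n)$ via \cref{prop:avg-dist-avg-stretch}.
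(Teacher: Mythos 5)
Your high-level intuition (most of $\HC_{n-1}$ stays fixed, the rest moves $O(\log n)$) matches the paper's, and your density estimate for $E \cap \Astribes$ is correct. But the construction of the bijection --- which you correctly flag as the hardest part --- has a genuine gap, and neither of the two routes you sketch would close it. The paper sidesteps explicit construction entirely: it bounds the $\ell_1$-Wasserstein distance $W_1(\mu_{n-1}, \mu_{\tribes}^*)$ by $O(\log n)$ via an explicit coupling (sample $x \sim \mu_{\tribes}^0$, independently draw $\L$ from the conditional distribution of the number of unanimous tribes under $\mu_{\tribes}^1$, set those $\L$ tribes to all-ones, and pass through two easy intermediate Wasserstein bounds to get from $\mu_{\tribes}^1$ to $\mu_{\tribes}^*$ and from $\mu_n$ to $\mu_{n-1}$). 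The quantitative crux is $\E[\L] = O(1)$, which you do not isolate; and the bijection is then extracted non-constructively by the Birkhoff--von Neumann theorem, since the optimal coupling sits at a vertex of the Birkhoff polytope, i.e.\ a permutation.

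Your route (ii), Hall's theorem on the bipartite graph between $E \setminus \Astribes$ and $\Astribes \setminus E$ with edges at Hamming distance at most $w+1$, fails outright. Take $y = 1^n \in \Atribes \setminus E$. Any $x$ with $\dist(x,1^n) \leq w+1$ has at most $w+1$ zero coordinates, and since $s = \Theta(2^w) \gg w+1$, at least one tribe of $x$ contains none of those zeros, so $\tribes(x)=1$ and $x \notin E \setminus \Astribes$. Hence $1^n$ has no neighbor in the domain side at all, and for $S = E \setminus \Astribes$ one gets $|N(S)| < |\Astribes \setminus E| = |S|$. No fixed radius of order $O(\log n)$ can repair this; the fractional/weighted matching viewpoint (i.e.\ the coupling) is exactly the replacement for unweighted Hall here.

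Your route (i), a direct encoding, is also unlikely to go through as sketched. The ``fill one tribe'' map destroys $w$ bits of information while creating only one bit of new structure (the flipped $x_n$), so you need to store roughly $w + \log s$ recovery bits elsewhere in $\phi(x)$; but those coordinates carry the data of the other tribes and must be preserved both for invertibility and for the $O(\log n)$ cost bound. Worse, the one-tribe-fill applied to an input with no unanimous tribe can produce at most two unanimous tribes in the image, so its range misses the entire part of $\Atribes \setminus E$ with three or more unanimous tribes, and surjectivity fails no matter how you encode. The coupling, which probabilistically fills a variable number $\L$ of tribes so as to match the target distribution exactly, is precisely the device that resolves both the surjectivity and the injectivity problems simultaneously, and is the key idea your plan is missing.
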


\subsection{Roadmap}
The rest of the paper is organized as follows.
We prove \cref{thm:sqrtn-transportation-dist} in \cref{sec:sqrtn-transportation}.
In \cref{sec:recmaj} we prove \cref{thm:recmaj-avgstretch}, and in \cref{sec:tribes} we prove \cref{thm:tribes-avgstretch}.

\section{Proof of \cref{thm:sqrtn-transportation-dist}}
\label{sec:sqrtn-transportation}

We provide two different proofs of \cref{thm:sqrtn-transportation-dist}.
The first proof, in \cref{sec:gale-shapley} shows a slightly weaker bound of $O\left(\sqrt{n \ln(n)}\right)$
on the average stretch using the Gale-Shapley result on the \emph{stable marriage problem}.
The idea of using the stable marriage problem was suggested in \cite{BCS}, and we implement this approach.
Then, in \cref{sec:wasserstein}, we show the bound of $O(\sqrt{n})$ by relating
the average stretch of a mapping between two sets to known estimates on the Wasserstein distance on the hypercube.

\subsection{Upper bound on the average transportation distance using stable marriage}\label{sec:gale-shapley}

Recall the Gale-Shapley theorem on the \emph{stable marriage problem}.
In the stable marriage problem we are given two sets of elements $A$ and $B$ each of size $N$.
For each element $a \in A$ (resp.\ $b \in B$) we have a ranking of the elements of $B$ (resp.\ $A$)
given as an bijection $\rk_a : A \to [N]$ ($\rk_b : B \to [N]$) representing the preferences of each $a$ (resp.\ $b$).
A matching (or a bijection) $\phi \colon A \to B$ is said to be \emph{unstable} if there are $a,a' \in A$, and $b,b' \in B$
such that $\phi(a) = b'$, $\phi(a') = b$, but $\rk_a(b) < \rk_a(b')$, and $\rk_b(a) < \rk_b(a')$;
that is, both $a$ and $b$ would prefer to be mapped to each other over their mappings given by $\phi$.
We say that a matching $\phi : A \to B$ is \emph{stable} otherwise.

\begin{theorem}[Gale-Shapley theorem]\label{thm:gale-shapley}
    For any two sets $A$ and $B$ of equal size and any choice of rankings
    for each $a \in A$ and $b \in B$ there exists a stable matching $m \colon A \to B$.
\end{theorem}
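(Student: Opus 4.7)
The plan is to prove the Gale-Shapley theorem by analyzing the \emph{deferred acceptance algorithm}, which constructively produces a stable matching. Orient the roles asymmetrically: elements of $A$ act as \emph{proposers} and elements of $B$ as \emph{receivers}. The algorithm proceeds in rounds. In each round, every unmatched $a \in A$ proposes to its most preferred $b \in B$ among those that have not yet rejected it. Each $b \in B$, upon receiving proposals in that round together with any tentative partner held from the previous round, keeps the most preferred proposer according to $\rk_b$ and rejects all others (who become unmatched again). The algorithm halts when every $a \in A$ holds a tentative match, at which point we output $\phi$ defined by these tentative pairings.

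First I would verify termination and the fact that $\phi$ is a bijection. Each $a \in A$ proposes to any given $b \in B$ at most once, so the total number of proposals is at most $N^2$, giving termination. For bijectivity, I would observe a monotonicity property: once $b \in B$ holds a tentative match, $b$ remains matched for the rest of the execution, and the rank (under $\rk_b$) of $b$'s tentative partner only improves over time. If at termination some $a \in A$ were unmatched, then $a$ would have been rejected by every element of $B$, which by the monotonicity property means every $b \in B$ is tentatively matched to some element of $A$. Since $|A| = |B| = N$, this is a contradiction, so every $a$ is matched, and because each $b$ holds at most one match, $\phi$ is a bijection.

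Next I would verify stability by contradiction. Suppose $(a,a',b,b')$ violates the stability condition with $\phi(a) = b'$, $\phi(a') = b$, $\rk_a(b) < \rk_a(b')$, and $\rk_b(a) < \rk_b(a')$. Since $a$ prefers $b$ over its final partner $b'$, during the algorithm $a$ must have proposed to $b$ before ever proposing to $b'$, and hence $b$ rejected $a$ at some point (either immediately or later in favor of another proposer). By the monotonicity property applied to $b$, the final partner $a' = \phi^{-1}(b)$ satisfies $\rk_b(a') \leq \rk_b(a)$, i.e., $b$ prefers $a'$ over $a$. This contradicts $\rk_b(a) < \rk_b(a')$, so no such blocking quadruple exists and $\phi$ is stable.

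I do not anticipate a genuine obstacle; the argument is standard. The only subtlety worth stating cleanly is the monotonicity invariant on the receiver side, since both the bijectivity and the stability arguments rely on it. I would state it as a small lemma before running the two main arguments, so that each subsequent step is a one-line consequence.
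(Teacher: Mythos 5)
The paper cites this as a classical result and gives no proof of its own, so there is nothing internal to compare against. Your argument is the standard deferred-acceptance proof from Gale and Shapley's original paper, and it is correct: termination follows from the bound on total proposals, bijectivity from the receiver-side monotonicity invariant together with $|A|=|B|$, and stability from the observation that a rejected proposer can only ever be displaced by someone the receiver prefers. Isolating the monotonicity invariant as a lemma is exactly the right organizational choice, since both later steps hinge on it. One small point worth making explicit if you write this up in full: ties in the rankings should be broken so that each $\rk_a$ and $\rk_b$ is a strict total order before running the algorithm (the paper's application does exactly this, breaking distance ties arbitrarily), since the deferred-acceptance argument as you've stated it implicitly assumes strict preferences.
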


For the proof of \cref{thm:sqrtn-transportation-dist} the sets $A$ and $B$ are subsets of $\HC_n$ of density 1/2.
We define the preferences between points based on the distances between them in $\HC_n$.
That is, for each $a \in A$ we have $\rk_a(b) < \rk_a(b')$ if and only if $\dist(a,b) < \dist(a,b')$ with ties broken arbitrarily.
Similarly, for each $b \in B$ we have $\rk_b(a) < \rk_b(a')$ if and only if $\dist(a,b) < \dist(a',b)$ with ties broken arbitrarily.

Let $\phi \colon A \to B$ be a bijection.
We show that if $\E_{x \in A}[\dist(x, \phi(x))]> 3k$ for $k=\ceil{\sqrt{n \ln(n)}}$, then $\phi$ is not a stable matching.
Consider the set
\begin{equation*}
    F := \{x \in A \mid  \dist(x, \phi(x)) \geq k\}.
\end{equation*}
Note that since the diameter of $\HC_n$ is $n$,
and $\E_{x \in A}[\dist(x, \phi(x))] > 3k$,
it follows that $\mu_n(F) > \frac{k}{n}$.
Indeed, we have $3k < \E_{x \in A}[\dist(x, \phi(x))] \leq n \cdot \frac{\mu_n(F)}{\mu_n(A)} + k \cdot (1-\frac{\mu_n(F)}{\mu_n(A)}) \leq n \cdot \frac{\mu_n(F)}{\mu_n(A)} + k$,
and thus $\mu_n(F) > \frac{2k}{n} \cdot \mu_n(A)$.
Next, we use Talagrand's concentration inequality.
\begin{theorem}[{\cite[Proposition 2.1.1]{Talagrand95}}]\label{thm:talagrand}
    Let $k \leq n$ be two positive integers, and let $F \seq \HC_n$.
    Let $F_{\geq k} = \{x \in \HC_n : \dist(x,y) \geq k \ \forall y \in F \}$ be the set of all $x \in \HC_n$
    whose distance from $F$ is at least $k$.
    Then $\mu_n(F_{\geq k}) \leq e^{-k^2/n} / \mu_n(F)$.
\end{theorem}
By \cref{thm:talagrand} we have $\mu_n(F_{\geq k}) \leq e^{-k^2/n} / \mu_n(F)$,
and hence, for $k = \ceil{\sqrt{n \ln(n)}}$ it holds that
\begin{equation*}
    \mu_n(F_{\geq k}) \leq e^{-\ln(n)} / \mu_n(F) \leq (1/n)/(k/n) = 1/k.
\end{equation*}
In particular, since $\mu_n(\phi(F)) =\mu_n(F) > k/n > 1/k \geq \mu_n(F_{\geq k})$,
there is some $b \in \phi(F)$ that does not belong to $F_{\geq k}$.
That is, there is some $a \in F$ and $b \in \phi(F)$ such that $\dist(a,b) < k$.
On the other hand, for $a' = \phi^{-1}(b)$,
by the definition of $F$ we have $\dist(a,\phi(a)) \geq k$ and $\dist(a',b=\phi(a')) \geq k$,
and hence $\phi$ is not stable, as $a$ and $b$ prefer to be mapped to each other over their current matching.
Therefore, in a stable matching $\E_{x \in A}[\dist(x, \phi(x))] \leq 3\ceil{\sqrt{n \ln(n)}}$,
and by the Gale-Shapley theorem such a matching does, indeed, exist.\qed

\subsection{Proof of \cref{thm:sqrtn-transportation-dist} using transportation theory}\label{sec:wasserstein}

Next we prove \cref{thm:sqrtn-transportation-dist}, by relating our problem to a known estimate on the Wasserstein distance between two measures on the hypercube.
Recall that the $\ell_1$-Wasserstein distance between two measures $\mu$ and $\nu$ on $\HC_n$ is defined as
\begin{equation*}
    W_1(\mu, \nu) = \inf_q \sum_{x, y} \dist(x,y) q(x,y),
\end{equation*}
where the infimum is taken over all couplings $q$ of $\mu$ and $\nu$, i.e., $\sum_{y'} q(x,y')  = \mu(x) $ and $\sum_{x'} q(x',y) = \nu(y)$ for all $x,y \in \HC_n$.
That is, we consider an optimal coupling $q$ of $\mu$ and $\nu$
minimizing $\E_{(x,y) \sim q}[\dist(x,y)]$, the expected distance between $x$ and $y$,
where $x$ is distributed according to $\mu$ and $y$ is distributed according to $\nu$.

We prove the theorem using the following two claims.
\begin{claim}\label{claim:W_1-bijection}
    Let $\mu_A$ and $\mu_B$ be uniform measures over the sets $A$ and $B$ respectively.
    Then, there exists a bijection $\phi$ from $A$ to $B$ such that $\E[\dist(x,\phi(x))]  = W_1(\mu_A,\mu_B)$.
\end{claim}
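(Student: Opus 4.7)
The plan is to show that the infimum in the definition of $W_1(\mu_A,\mu_B)$ is attained by a coupling of the special form $q(x,y) = \frac{1}{|A|}\mathbf{1}[y=\phi(x)]$ for some bijection $\phi \colon A \to B$. This reduces to a standard application of the Birkhoff--von Neumann theorem.

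First I would set $N = |A| = |B|$ and observe that the set of couplings of $\mu_A$ and $\mu_B$ is a nonempty, compact, convex polytope in $\mathbb{R}^{A \times B}$, defined by the linear constraints $q(x,y) \geq 0$, $\sum_y q(x,y) = 1/N$ for all $x \in A$, and $\sum_x q(x,y) = 1/N$ for all $y \in B$. Rescaling via $M(x,y) = N \cdot q(x,y)$ gives a bijective correspondence between couplings and $N \times N$ doubly stochastic matrices indexed by $A \times B$. The objective $\sum_{x,y} \dist(x,y) q(x,y)$ is linear in $q$, so the infimum is attained and is in fact attained at some extreme point of the polytope of couplings.

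Next I would invoke the Birkhoff--von Neumann theorem, which states that the extreme points of the doubly stochastic polytope are exactly the permutation matrices. Translating back, the extreme points of the coupling polytope correspond to couplings of the form $q_\phi(x,y) = \frac{1}{N}\mathbf{1}[y = \phi(x)]$ for some bijection $\phi \colon A \to B$. Let $\phi$ be a bijection realizing the optimum; then
\begin{equation*}
    W_1(\mu_A,\mu_B) = \sum_{x,y} \dist(x,y) q_\phi(x,y) = \frac{1}{N}\sum_{x \in A} \dist(x,\phi(x)) = \mathbb{E}_{x \sim \mu_A}[\dist(x,\phi(x))],
\end{equation*}
which is exactly the claimed identity.

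There is no real obstacle here: the only thing to be slightly careful about is the bookkeeping of the $1/N$ factors when moving between couplings and doubly stochastic matrices, and the fact that one must use the ``infimum is attained at an extreme point'' direction rather than the general minimax form. Both are standard facts about linear programming on compact polytopes, so the proof is essentially a citation of Birkhoff--von Neumann together with the observation that uniform measures of equal mass put us in exactly the setting where that theorem applies.
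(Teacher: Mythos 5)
Your proof is correct and follows essentially the same route as the paper: both identify the coupling polytope with the Birkhoff polytope of doubly stochastic matrices, observe that the linear objective attains its infimum at an extreme point, and invoke the Birkhoff--von Neumann theorem to conclude that this extreme point is a permutation, i.e.\ a bijection. The only cosmetic difference is that the paper separately records the easy direction (any bijection induces a coupling, so $W_1 \leq \E[\dist(x,\phi(x))]$), whereas you fold it into the extremality argument directly.
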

\begin{claim}\label{claim:W_1-less-sqrtn}
    Let $\mu_A$ and $\mu_B$ be uniform measures over the sets $A$ and $B$ respectively.
    Then $W_1(\mu_A, \mu_B) \leq \sqrt{2n}$.
\end{claim}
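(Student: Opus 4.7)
\textbf{Proof plan for \cref{claim:W_1-less-sqrtn}.}
My plan is to bound $W_1(\mu_A,\mu_B)$ by routing the mass transportation through the uniform measure $U_n$ on $\HC_n$. Since $W_1$ is a genuine metric on the space of probability measures on the finite space $\HC_n$, the triangle inequality gives
\begin{equation*}
W_1(\mu_A,\mu_B) \;\leq\; W_1(\mu_A,U_n) \;+\; W_1(U_n,\mu_B),
\end{equation*}
so it suffices to bound each of the two one-sided distances by roughly $\sqrt{n/2}$.

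For the one-sided bound I would invoke Marton's transportation--entropy inequality for the uniform measure on the hypercube with Hamming metric: for every probability measure $\nu$ on $\HC_n$,
\begin{equation*}
W_1(\nu, U_n) \;\leq\; \sqrt{\tfrac{n}{2}\, D(\nu \,\|\, U_n)},
\end{equation*}
where $D(\cdot\,\|\,\cdot)$ denotes Kullback--Leibler divergence. Specialising to $\nu = \mu_A$ is a one-line computation: because $\mu_A(x) = 2/2^n$ on $A$ and $0$ off $A$, we have $D(\mu_A \,\|\, U_n) = \log(2^n/|A|) = \log 2$. Consequently $W_1(\mu_A, U_n) \leq \sqrt{(n \log 2)/2}$, and by the exact same argument the same bound holds with $B$ in place of $A$.

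Plugging these two estimates back into the triangle inequality gives
\begin{equation*}
W_1(\mu_A,\mu_B) \;\leq\; 2\sqrt{(n\log 2)/2} \;=\; \sqrt{2n\log 2} \;<\; \sqrt{2n},
\end{equation*}
which is the desired bound. The only non-trivial ingredient is Marton's inequality, which I would simply cite from a standard concentration-of-measure reference rather than reprove; the small obstacle is pinning down the correct constant (namely $n/2$) on the Hamming cube, but once that is in hand everything else is bookkeeping. A fully self-contained alternative is to apply Kantorovich--Rubinstein duality together with McDiarmid/Azuma concentration of $1$-Lipschitz functions on $\HC_n$ around their mean under $U_n$; this loses a factor of order $\sqrt{\pi}$ in the constant but still yields a bound of the form $O(\sqrt n)$, which is enough to conclude \cref{thm:sqrt-n-avg-stretch} via \cref{claim:W_1-bijection} and \cref{prop:avg-dist-avg-stretch}.
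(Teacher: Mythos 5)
Your proposal is correct and follows essentially the same route as the paper: the paper also detours through the uniform measure $\mu_n$ via the $W_1$ triangle inequality and invokes the transportation--entropy inequality $W_1(\nu,\mu_n)\le\sqrt{(n/2)\,D(\nu\|\mu_n)}$ (cited there from Raginsky--Sason, but this is precisely Marton's inequality), together with the same one-line computation of the KL divergence of a uniform measure on a density-$1/2$ set. The only cosmetic difference is the logarithm base in the KL term (the paper uses base-$2$ to get $D=1$ and the bound $\sqrt{2n}$, while your natural-log choice gives the slightly sharper $\sqrt{2n\ln 2}$).
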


\begin{proof}[Proof of \cref{claim:W_1-bijection}]
Observe that any bijection $\phi$ from $A$ to $B$ naturally defines a coupling $q$ of $\mu_A$ and $\mu_B$,
defined as
    \begin{equation*}
        q(x, y) = \begin{cases}
                        \frac{1}{|A|} & \mbox{if $x \in A$ and $y=\phi(x)$},  \\
                         0 & \mbox{otherwise.}
                       \end{cases}
    \end{equation*}
Therefore, $W_1(\mu_A,\mu_B) \leq \E_{x \in A}[\dist(x,\phi(x))]$.

For the other direction note that in the definition of $W_1$ we are looking for the infimum of the linear function
$L(q) = \sum_{(x,y) \in A \times B} \dist(x,y) q(x,y)$, where the infimum is taken over the \emph{Birkhoff polytope} of all $n \times n$ doubly stochastic matrices.
By the Birkhoff-von Neumann theorem~\cite{Bir46,vonNeumann53,Konig36} this polytope is the convex hull whose extremal points are precisely the permutation matrices.
The optimum is obtained on such an extremal point, and hence there exists a bijection $\phi$ from $A$ to $B$
such that $W_1(\mu_A,\mu_B) = \E[\dist(x,\phi(x))]$.
\end{proof}

\begin{proof}[Proof of \cref{claim:W_1-less-sqrtn}]
    The proof of the claim follows rather directly from the techniques in \emph{transportation theory} (see~\cite[Section 3.4]{RaSa15}).
    Specifically, using Definition 3.4.2 and combining Proposition 3.4.1, Equation 3.4.42, and Proposition 3.4.3,
    where $\mathcal{X} = \Bits$, and $\mu$ is the uniform distribution on $\mathcal{X}$ we have the following theorem.
    \begin{theorem}\label{thm:transport}
        Let $\nu$ be an arbitrary distribution on the discrete hypercube $\HC_n$,
        and let $\mu_n$ be the uniform distribution on $\HC_n$. Then
        \begin{equation*}
            W_1(\nu,\mu_n) \leq \sqrt{\half n \cdot D(\nu \mid \mid \mu_n)},
        \end{equation*}
        where $D(\nu \mid \mid \mu)$ is the Kullback-Leibler divergence defined as $D(\nu \mid \mid \mu) = \sum_{x} \nu(x) \log(\frac{\nu(x)}{\mu(x)})$.
    \end{theorem}
    In particular, by letting $\nu = \mu_A$ be the uniform distribution over the set $A$ of cardinality $2^{n-1}$, we have
    $D(\mu_A \mid \mid \mu_n) = \sum_{x \in A} \mu_A(x)\log\left(\frac{\mu_A(x)}{\mu_n(x)}\right) = \sum_{x \in A} \frac{1}{|A|}\log(2) = 1$,
    and hence $W_1(\mu_n,\mu_A) \leq \sqrt{\half n \cdot D(\mu_n \mid \mid \nu)} = \sqrt{n/2}$.
    Analogously, we have $W_1(\mu_n,\mu_B) \leq \sqrt{n/2}$.
    Therefore, by the triangle inequality, we conclude that $W_1(\mu_A,\mu_B) \leq W_1(\mu_A,\mu_n) + W_1(\mu_n,\mu_B) \leq \sqrt{2n}$, as required.
\end{proof}

This completes the proof of \cref{thm:sqrtn-transportation-dist}.

\section{Average stretch for recursive majority of 3's}
\label{sec:recmaj}

In this section we prove \cref{thm:recmaj-avgstretch}, showing a mapping from $\HC_n$ to $\Arecmaj$ with constant average stretch.
The key step in the proof is the following lemma.
\begin{lemma}\label{lemma:f-k-mapping}
Let $k$ be a positive integer, and let $n = 3^k$.
There exists $f_k \colon \HC_n \to \Arecmaj$ satisfying the following properties.
    \begin{enumerate}
      \item $f_k(x) =x$ for all $x \in \Arecmaj$.
      \item For each $x \in \Arecmaj$ there is a unique $z \in \Zrecmaj \DefineEqual \HC_n \setminus \Arecmaj$ such that $f_k(z) = x$.
      \item For every $i \in [n]$ we have $\E_{x \in \HC_n}[\dist(f_k(x),f_k(x+e_i)) ] \leq 10$. \label{item:f-k-stretc}
    \end{enumerate}
\end{lemma}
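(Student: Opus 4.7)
The plan is to construct $f_k$ by induction on $k$, exploiting the recursive block structure of $\recmaj$. For the base case $k=1$, set $f_1(x)=x$ when $\maj(x)=1$ and $f_1(x) = \1-x$ otherwise; a direct check on the twelve edges of $\HC_3$ verifies properties 1--3 with average stretch $3/2$ and displacement exactly $3$. For $k \geq 2$, split $x = x^{(1)} \circ x^{(2)} \circ x^{(3)}$ and set $b_r = \recmaj[k-1](x^{(r)})$. If $\maj(b_1,b_2,b_3)=1$ put $f_k(x)=x$. Otherwise the triple $(b_1,b_2,b_3)$ lies in one of the four ``$Z$-patterns'', and I define $f_k$ blockwise using $f_{k-1}$ on blocks that are to flip from $\Zrecmaj[k-1]$ to $\Arecmaj[k-1]$ and the identity on blocks whose type is preserved, choosing the target patterns so as to minimise the number of blockwise flips: $(0,0,0)\to(1,1,1)$ (all three blocks flipped), $(1,0,0)\to(1,1,0)$ (only block $2$), $(0,1,0)\to(0,1,1)$ (only block $3$), and $(0,0,1)\to(1,0,1)$ (only block $1$). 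On each source $Z$-pattern $f_k$ is thus a product of bijections ($f_{k-1}|_{\Zrecmaj[k-1]}$ or identities) into a distinct $A$-pattern, so properties 1 and 2 follow at once.

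For property 3, I would fix a random edge $(x,x+e_i)$, condition on the block containing $i$ (WLOG block $1$, by symmetry), and analyse whether $b_1$ flips. The total influence of $\recmaj[k-1]$ satisfies the classical recursion $I(\recmaj[k-1])=(3/2)^{k-1}$, so $\Pr[b_1 \neq b_1'] = I(\recmaj[k-1])/3^{k-1} = (1/2)^{k-1}$. In the common case $b_1=b_1'$ the two endpoints share a top-pattern, and $f_k$ acts on block $1$ either as $f_{k-1}$ (contributing the conditional within-$Z$ stretch $T_{k-1}^{ZZ}$) or as the identity (contributing $1$); summing over the eight top-patterns yields a within-cube contribution of $\tfrac{3}{4} + \tfrac{1}{4}\,T_{k-1}^{ZZ}$. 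In the rare case $b_1 \neq b_1'$ the stretch is dominated, via the triangle inequality, by the displacement $\alpha_{k-1}:=\E_{x\in \Zrecmaj[k-1]}[\dist(x,f_{k-1}(x))]$. Under the minimal-flip rule, $\alpha_k$ averages one block-flip of weight $\alpha_{k-1}$ on three of the four $Z$-patterns and three block-flips on one, giving $\alpha_k \leq \tfrac{3}{2}\alpha_{k-1}$ and hence $\alpha_k = O((3/2)^k)$. Consequently the cross-pattern contribution is at most $(1/2)^{k-1}\cdot O((3/2)^{k-1}) = O((3/4)^{k-1})$, uniformly bounded in $k$.

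Combining the two cases and using the conditional-to-unconditional bound $T_{k-1}^{ZZ} \leq T_{k-1}/\Pr[ZZ]$ (with $\Pr[ZZ] = (1-(1/2)^{k-2})/2$ approaching $1/2$) yields a recurrence of the form $T_k \leq c\,T_{k-1} + O(1)$ with $c<1$ for $k$ large enough, so $T_k = O(1)$ with a fixed-point value comfortably below the stated bound of $20$. The main obstacle I anticipate is the bookkeeping: the natural recursion bounds $T_k$ not in terms of $T_{k-1}$ alone but in terms of several conditional stretches ($T_{k-1}^{ZZ}$, $T_{k-1}^{ZA}$) together with the displacement $\alpha_{k-1}$, which I would need to track in tandem. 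The crucial point is that $\alpha_k$ is allowed to grow as $(3/2)^k$ because the probability of a pattern-crossing edge shrinks faster, as $(1/2)^k$; balancing this trade-off is exactly what makes the minimal-flip choice of target patterns preferable to, say, the complement-at-top-level construction, which would force $\alpha_k$ to grow as $3^k$ and destroy the bound.
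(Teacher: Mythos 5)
Your construction is sound and structurally similar to the paper's: a blockwise recursion with a base-case bijection $f_1$ and a rule for which blocks to ``flip'' via $f_{k-1}$ given the top pattern $(b_1,b_2,b_3)$. Your specific choices differ from the paper's (a complement-based $f_1$ with displacement $3$ rather than the paper's monotone $f_1$ with average displacement $3/2$, and a minimal-flip top-level rule sending $(0,0,0)\mapsto(1,1,1)$ rather than the paper's $f_1$-based rule sending $(0,0,0)\mapsto(1,1,0)$), but both yield the same recursion $\alpha_k = \tfrac{3}{2}\alpha_{k-1}$, so both are in principle workable. Properties 1 and 2 are fine.

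The gap is in the cross-pattern case $b_1 \neq b_1'$. You assert that the conditional stretch there ``is dominated, via the triangle inequality, by the displacement $\alpha_{k-1}$'', i.e.\ that $\E[\dist(f_k(x),f_k(x'))\mid b_1\neq b_1'] = O((3/2)^{k-1})$, and then multiply by $\Pr[b_1\neq b_1'] = 2^{-(k-1)}$. But the triangle inequality only reduces this to bounding $\E[\dist(x^{(1)},f_{k-1}(x^{(1)}))\mid b_1=0,\,b_1'=1]$, a displacement \emph{conditioned} on the rare boundary event at level $k-1$, which is not the unconditional $\alpha_{k-1}$. A naive Markov argument from $\alpha_{k-1}=\Theta((3/2)^{k-1})$ and $\Pr[\text{boundary}]=2^{-(k-1)}$ only yields $O(3^{k-1})$ — precisely the useless bound the paper's footnote warns against. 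The paper closes this gap with a separate inner induction (its equation showing $\E[\dist(x,f_k(x))\mid E_2]\leq \sum_{j<k}(3/2)^j$), proving that conditioning on the boundary event does not blow up the displacement. Your proposal has nothing playing this role, and without it the $(1/2)^{k-1}\cdot O((3/2)^{k-1})$ bound you rely on is unjustified. You also acknowledge that the recursion for $T_k$ via $T_{k-1}^{ZZ}$, $T_{k-1}^{ZA}$ and $\alpha_{k-1}$ is left as ``bookkeeping''; that recursion does close (the paper in effect carries it out by decomposing on the lowest agreeing level $\mineq$ rather than only on the top block), but as written your argument does not yet establish the constant bound.
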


We postpone the proof of \cref{lemma:f-k-mapping} for now, and show how it implies \cref{thm:recmaj-avgstretch}.

\begin{proof}[Proof of \cref{thm:recmaj-avgstretch}]
  Let $f_k$ be the mapping from \cref{lemma:f-k-mapping}.
  Define $\psi_0,\psi_1 \colon \HC_{n-1} \to \Arecmaj$ as $\psi_b(x) = f_k(x \circ b)$,
  where $x \circ b \in \HC_n$ is the string obtained from  $x$ by appending to it $b$ as the $n$'th coordinate.

  The mappings $\psi_0,\psi_1$ naturally induce a bipartite graph $G = (V,E)$, where $V = \HC_{n-1} \cup \Arecmaj$
  and $E = \{(x,\psi_b(x)) : x \in \HC_{n-1}, b \in \Bits \}$, possibly, containing parallel edges.
  Note that by the first two items of \cref{lemma:f-k-mapping} the graph $G$ is 2-regular.
  Indeed, for each $x \in \HC_n$ the neighbours of $x$ are $N(x) = \{ \psi_0(x) = f_k(x \circ 0), \psi_1(x) = f_k(x \circ 1) \}$,
  and for each $y \in \Arecmaj$ there is a unique $x \in \Arecmaj$ and a unique $z \in \Zrecmaj$ such that $f_k(x) = f_k(z) = 1$, and
  hence $N(y) = \{ x_{[1,\dots,n-1]}, z_{[1,\dots,n-1]} \}$.

  Since the bipartite graph $G$ is 2-regular, it has a perfect matching.
  Let $\phi$ be the bijection from $\HC_{n-1}$ to $\Arecmaj$ induced by a perfect matching in $G$,
  and for each $x \in \HC_n$ let $b_x \in \HC_n$ be such that $\phi(x) = \psi_{b_x}(x)$.
  We claim that $\avgstretch(\phi) = O(1)$. Let $x \sim x'$ be uniformly random vertices in $\HC_{n-1}$ that differ in exactly one coordinate,
  and let $r \in \Bits$ be uniformly random. Then
  \begin{eqnarray*}
  \E[\dist(\phi(x), \phi(x'))]
      & = & \E[\dist(f_k(x \circ b_x), f_k(x' \circ b_{x'}))] \\
      & \leq & \E[\dist(f_k(x \circ b_x), f_k(x \circ r))] + \E[\dist(f_k(x \circ r), f_k(x' \circ r))]  \\
      & & + \E[\dist(f_k(x' \circ r), f_k(x' \circ b_{x'}))].
  \end{eqnarray*}
  For the first term, since $r$ is equal to $b_x$ with probability $1/2$ by \cref{lemma:f-k-mapping} \cref{item:f-k-stretc}
  we get that $\E[\dist(f_k(x \circ b_x), f_k(x \circ r))]\leq 5$.
  Analogously the third term is bounded by $5$.
  In the second term we consider the expected distance between $f(\cdot)$ applied on inputs that differ in a random coordinate $i \in [n-1]$,
  which is at most $10$, again, by \cref{lemma:f-k-mapping} \cref{item:f-k-stretc}.
  Therefore $\E[\dist(\phi(x), \phi(x'))] \leq 20$.
\end{proof}

We return to the proof of \cref{lemma:f-k-mapping}.
\begin{proof}[Proof of \cref{lemma:f-k-mapping}]
    Define $f_k \colon \HC_n \to \Arecmaj$ by induction on $k$.
    For $k = 1$ define $f_1$ as
    \begin{align*}
        000 &\mapsto 110 \\
        100 &\mapsto 101 \\
        010 &\mapsto 011 \\
        001 &\mapsto 111 \\
        x  &\mapsto x \quad \textrm{for all } x \in \{110,101,011,111\}.
    \end{align*}
    That is, $f_1$ acts as the identity map for all $x \in \Arecmaj[1]$, and maps all inputs in $\Zrecmaj[1]$ to $\Arecmaj[1]$ in a one-to-one way.
    Note that $f_1$ is a non-decreasing mapping, i.e., $(f_1(x))_i \geq x_i$ for all $x \in \HC_3$ and $i \in [3]$.

    For $k>1$ define $f_k$ recursively using $f_{k-1}$ as follows.
    For each $\third \in [3]$, let $T_\third = [(\third-1) \cdot 3^{k-1}+1, \ldots,\third \cdot 3^{k-1}]$ be the $\third$'th third of the interval $[3^k]$.
    For $x \in \HC_{3^k}$, write $x = x^{(1)} \circ x^{(2)} \circ x^{(3)}$,
    where $x^{(\third)} = x_{T_\third} \in \HC_{3^{k-1}}$ is the $\third$'th third of $x$.
    Let $y = (y_1,y_2,y_3) \in \Bits^3$ be defined as $y_\third = \recmaj[k-1](x^{(\third)})$, and let $w =(w_1,w_2,w_3) = f_1(y) \in \Bits^3$.
    Define
    \begin{equation*}
        f_{k-1}^{(r)}(x^{(r)}) = \begin{cases}
                         f_{k-1}(x^{(\third)}) & \mbox{if $w_\third \neq y_\third$},  \\
                         x^{(\third)} & \mbox{otherwise}.
                       \end{cases}
    \end{equation*}
    Finally, the mapping $f_k$ is defined as
    \begin{equation*}
        f_k(x) =
          f_{k-1}^{(1)}(x^{(1)}) \circ f_{k-1}^{(2)}(x^{(2)}) \circ f_{k-1}^{(3)}(x^{(3)}).
    \end{equation*}
    That is, if $\recmaj(x) = 1$ then $w = y$, and hence $f_k(x) = x$,
    and otherwise, $f_{k-1}^{(r)}(x^{(r)}) \neq x^{(r)}$ for all $r \in [3]$ where $y_r = 0$ and $w_r = 1$.

    Next we prove that $f_k$ satisfies the properties stated in \cref{lemma:f-k-mapping}.
    \begin{enumerate}
      \item It is clear from the definition that if $\recmaj(x) = 1$, then $w = y$, and hence $f_k(x) = x$.

      \item Next, we prove by induction on $k$ that the restriction of $f_k$ to $\Zrecmaj$ induces a bijection.
      For $k = 1$ the statement clearly holds.
      For $k > 2$ suppose that the restriction of $f_{k-1}$ to $\Zrecmaj[k-1]$ induces a bijection.
      We show that for every $x \in \Arecmaj$ the mapping $f_k$ has a preimage of $x$ in $\Zrecmaj$.
      Write $x = x^{(1)} \circ x^{(2)} \circ x^{(3)}$,
      where $x^{(\third)} = x_{T_\third} \in \HC_{3^{k-1}}$ is the $\third$'th third of $x$.
      Let $w = (w_1,w_2,w_3)$ be defined as $w_\third = \recmaj[k-1](x^{(\third)})$.
      Since $x \in \Arecmaj$ it follows that $w \in \{110,101,011,111\}$.
      Let $y =(y_1,y_2,y_3) \in \Zrecmaj[1]$ such that $f_1(y) = w$.

      For each $\third \in [3]$ such that $w_\third = 1$ and $y_\third = 0$
      it must be the case that $x^{(\third)} \in \Arecmaj[k-1]$,
      and hence, by the induction hypothesis, there is some $z^{(\third)} \in \Zrecmaj[k-1]$ such that $f_{k-1}(z^{(\third)}) = x^{(\third)}$.
      For each $\third \in [3]$ such that $y_\third = w_\third$ define $z^{(\third)} = x^{(\third)}$.
      Since $y =(y_1,y_2,y_3) \in \Zrecmaj[1]$, it follows that $z = z^{(1)} \circ z^{(2)} \circ z^{(3)} \in \Zrecmaj$.
      It is immediate by the construction that, indeed, $f_k(z) = x$.

      \item Fix $i \in [3^k]$. In order to prove $\E[\dist(f_k(x),f_k(x+e_i)) ] = O(1)$
      consider the following events.
      \begin{eqnarray*}
        & E_1 = \{ \recmaj(x) = 1 = \recmaj(x+e_i)\}, \\
        & E_2 = \{ \recmaj(x) = 0, \recmaj(x+e_i) = 1\}, \\
        & E_3 = \{ \recmaj(x) = 1, \recmaj(x+e_i) = 0\}, \\
        & E_4 = \{ \recmaj(x) = 0 = \recmaj(x+e_i)\}.
      \end{eqnarray*}
      Then $\E[\dist(f_k(x), f_k(x+e_i))] = \sum_{j=1,2,3,4}\E[\dist(f_k(x), f_k(x+e_i)) | E_j] \cdot \Pr[E_j]$.
      The following three claims prove an upper bound on $\E[\dist(f_k(x), f_k(x+e_i))]$.

      \begin{claim}\label{claim:recmaj-E1}
        $\E[\dist(f_k(x), f_k(x+e_i)) | E_1] = 1$.
      \end{claim}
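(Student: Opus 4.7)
The plan is to invoke property (1) of \cref{lemma:f-k-mapping}, which states that $f_k$ acts as the identity on $\Arecmaj$. Under the event $E_1$, both $x$ and $x+e_i$ lie in $\Arecmaj$ by definition, so $f_k(x) = x$ and $f_k(x+e_i) = x+e_i$. Therefore $\dist(f_k(x), f_k(x+e_i)) = \dist(x, x+e_i) = 1$ deterministically on $E_1$, and the conditional expectation is exactly $1$.

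There is no real obstacle here; the claim is an immediate consequence of property (1) and has been stated separately only to make the case analysis in the final inequality $\E[\dist(f_k(x), f_k(x+e_i))] = \sum_{j} \E[\dist(f_k(x), f_k(x+e_i)) \mid E_j] \cdot \Pr[E_j]$ easier to follow. The genuinely nontrivial cases will be $E_2$, $E_3$, and especially $E_4$, where the recursive structure of $f_k$ must be unwound and the induction on $k$ exploited to show that the local flips propagate to a constant-in-expectation number of coordinate changes.
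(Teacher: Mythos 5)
Your proof is correct and matches the paper's argument exactly: under $E_1$ both $x$ and $x+e_i$ lie in $\Arecmaj$, property (1) gives $f_k(x)=x$ and $f_k(x+e_i)=x+e_i$, so the distance is deterministically $1$.
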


      \begin{claim}\label{claim:recmaj-E2}
        $\E[\dist(f_k(x), f_k(x+e_i)) | E_2] \leq 2 \cdot 1.5^k$.
      \end{claim}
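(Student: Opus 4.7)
The plan is to exploit the fact that under $E_2$ the point $x+e_i$ lies in $\Arecmaj$, so by the first item of \cref{lemma:f-k-mapping} we have $f_k(x+e_i) = x+e_i$. The triangle inequality then gives $\dist(f_k(x), f_k(x+e_i)) = \dist(f_k(x), x+e_i) \leq \dist(f_k(x), x) + 1$, so it suffices to show $\E[\dist(f_k(x), x) \mid E_2] \leq 2 \cdot 1.5^k - 1$, where we write $E_2(i) = \{\recmaj(x) = 0,\, \recmaj(x+e_i) = 1\}$ to emphasize the dependence on $i$.

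I would track two quantities by simultaneous induction on $k$: the auxiliary $M_k := \E_{x \in \Zrecmaj}[\dist(f_k(x), x)]$ and the main quantity $N_k^i := \E[\dist(f_k(x), x) \mid E_2(i)]$, proving that both are at most $1.5^k$. The base case $k=1$ follows by inspection of $f_1$: the distances $2,1,1,2$ averaged over $\Zrecmaj[1] = \{000, 100, 010, 001\}$ give $1.5$ in both instances. For the inductive step on $M_k$, decompose $x = x^{(1)} \circ x^{(2)} \circ x^{(3)}$ and set $y_\third = \recmaj[k-1](x^{(\third)})$. Conditional on $x \in \Zrecmaj$ the triple $y$ is uniform over $\Zrecmaj[1]$, and conditional on $y$ the three thirds of $x$ are independent with marginals uniform over $\Arecmaj[k-1]$ or $\Zrecmaj[k-1]$ according to $y_\third$. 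Since $\dist(f_k(x), x) = \sum_{\third : w_\third \neq y_\third} \dist(f_{k-1}(x^{(\third)}), x^{(\third)})$, and the flip counts across the four cases of $y$ are $(2,1,1,2)$ by the definition of $f_1$, one obtains $M_k = \frac{6}{4} M_{k-1} = 1.5\, M_{k-1}$, hence $M_k = 1.5^k$.

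For the main quantity, fix $i$ and say $i \in T_\third$. Conditioning on $E_2(i)$ forces $y_\third = 0$ and forces the top-level flip of coordinate $\third$ to be pivotal for $\maj$, so exactly one of the other two coordinates of $y$ equals $1$; this leaves only two possible values of $y$, each with equal conditional probability, and the restriction of $x$ to third $\third$ is distributed as $\Zrecmaj[k-1]$ conditioned on the $(k-1)$-level analogue of $E_2(i)$ for bit $i$ within its third. Reading off the flip sets from $f_1$, for $\third \in \{1,2\}$ the worst-case recursion becomes $N_k^i \leq \half(N_{k-1}^i + M_{k-1}) + \half M_{k-1} = \half N_{k-1}^i + M_{k-1}$, while for $\third = 3$ only the pivotal third is flipped in both values of $y$, and the recursion tightens to $N_k^i \leq N_{k-1}^i$. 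Combined with $M_{k-1} = 1.5^{k-1}$ and the base case $N_1^i \leq 1.5$, induction gives $N_k^i \leq \half \cdot 1.5^{k-1} + 1.5^{k-1} = 1.5^k$, whence $\E[\dist(f_k(x), f_k(x+e_i)) \mid E_2] \leq 1.5^k + 1 \leq 2 \cdot 1.5^k$, as required.

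The main obstacle I anticipate is the bookkeeping of the conditional distributions. One must verify that conditional on $E_2(i)$ the triple $y$ takes exactly the two values identified and each with probability $\half$; that conditional on $y$ the three restrictions $x^{(\third)}$ remain independent with the marginals claimed; and that the induced condition on the pivotal third is precisely the $(k-1)$-level version of $E_2(i)$ for bit $i$ within its third, so that the induction hypothesis applies. Once this distributional structure is set up, the remainder is a mechanical case analysis over $y \in \Zrecmaj[1]$ using the explicit flip pattern of $f_1$.
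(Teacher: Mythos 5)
Your proof is correct and takes essentially the same route as the paper: reduce via the triangle inequality and $f_k(x+e_i) = x+e_i$ to bounding $\E[\dist(x,f_k(x)) \mid E_2]$, then run a simultaneous induction on the unconditional quantity $M_k = \E[\dist(x,f_k(x)) \mid \recmaj(x)=0]$ and the boundary-conditioned quantity $N_k^i$, decomposing by the top-level value $y$ and reading off the flip pattern from $f_1$. The paper does exactly this, obtaining $M_k = 1.5^k$ (their Equation~\ref{eq:dist-1.5^k}) and then $\E[\dist(x,f_k(x)) \mid E_2] \leq \sum_{j=0}^{k-1} 1.5^j = 2(1.5^k - 1)$ (their Equation~\ref{eq:dist-cond-on-boundary-1.5^k}), yielding the same final bound $\leq 2\cdot 1.5^k$.

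The one place where you diverge, and actually do slightly better, is that you solve the recursion $N_k \leq \tfrac12 N_{k-1} + M_{k-1}$ exactly to get $N_k \leq 1.5^k$, whereas the paper bounds the conditional average by the larger of the two branches ($y=001$ vs. $y=010$) and therefore only claims $N_k \leq \sum_{j=0}^{k-1}1.5^j$. Your version is tighter for $k \geq 2$. It also sidesteps a small inaccuracy in the paper's base case: the paper asserts that Equation~\ref{eq:dist-cond-on-boundary-1.5^k} ``clearly holds'' at $k=1$, i.e., $\E[\dist(x,f_1(x)) \mid E_2] \leq 1$, but a direct check gives $1.5$ for $i \in \{1,2\}$ (e.g., for $i=1$ the two admissible inputs $010, 001$ map at distances $1$ and $2$). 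Your base case $N_1^i \leq 1.5$ is the correct one, and your clean recursion $N_k \leq \tfrac12 N_{k-1} + 1.5^{k-1}$ with it gives $N_k \leq 1.5^k$ by induction. One small notational slip: in the $\third=3$ case you write $N_k^i \leq N_{k-1}^i$; this should read $N_k^i \leq N_{k-1}^{i'}$ where $i'$ is the local index of $i$ inside $T_3$, but as you correctly note in your ``obstacles'' paragraph the induction applies to the $(k-1)$-level analogue of $E_2$ at that local coordinate, so this is just a re-indexing.
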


      \begin{claim}\label{claim:recmaj-E4}
        $\E[\dist(f_k(x), f_k(x+e_i)) | E_4] \cdot \Pr[E_4] \leq 8$.
      \end{claim}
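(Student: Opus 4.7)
The plan is to fix a bit $i \in T_r$ for some $r \in [3]$ (the third containing $i$) and to decompose $E_4$ according to how flipping bit $i$ interacts with the top-level recursive structure of $f_k$. Let $y = (\recmaj[k-1](x^{(1)}), \recmaj[k-1](x^{(2)}), \recmaj[k-1](x^{(3)}))$ and define $y'$ analogously for $x+e_i$; set $w = f_1(y)$ and $w' = f_1(y')$. Since $x$ and $x+e_i$ differ only in bit $i \in T_r$, the vectors $y, y'$ agree outside coordinate $r$; under $E_4$ both further lie in $\Zrecmaj[1] = \{000,100,010,001\}$. This leaves two cases: (I) $y_r = y'_r$ (bit $i|T_r$ is not pivotal for $\recmaj[k-1]$ on $x^{(r)}$), so $y = y'$ and $w = w'$; and (II) $y_r \neq y'_r$, which under $E_4$ forces $\{y, y'\} = \{000, e_r\}$ and has probability $2^{-k-1}$.

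In Case (I), $f_k(x)$ and $f_k(x+e_i)$ agree on every third other than the $r$-th. In the $r$-th third, either (I.a) both sides leave it unchanged (when $r \notin S(y) := \{s : w_s \neq y_s\}$), contributing distance $1$; or (I.b) both apply $f_{k-1}$ to it (when $r \in S(y)$), contributing distance $\dist(f_{k-1}(x^{(r)}), f_{k-1}(x^{(r)} + e_{i|T_r}))$ with both arguments in $\Zrecmaj[k-1]$. Factoring over the three independent thirds, subcase (I.a) contributes at most $\tfrac14$ to the expectation, and subcase (I.b) contributes exactly $\tfrac12 \cdot G_{k-1}(i|T_r)$, where $G_{k-1}(i') := \E[\dist(f_{k-1}(z), f_{k-1}(z+e_{i'})) \cdot \1_{z,\,z+e_{i'} \in \Zrecmaj[k-1]}]$ is the analogous quantity one level down.

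In Case (II), a split on $r$ gives the distance. For $r = 3$, both $y$ and $y'$ lie in $\{000, 001\}$, and both have $S = \{1, 2\}$; since $i \in T_3$ the two inputs agree on thirds $1$ and $2$, and thus $f_k(x)$ and $f_k(x+e_i)$ differ only in bit $i$, giving distance exactly $1$. For $r \in \{1, 2\}$, a per-third calculation shows the distance is $d_1 + d_2 + d_3$, where each $d_s$ is bounded via the triangle inequality by $\dist(f_{k-1}(x^{(s)}), x^{(s)}) + O(1)$. Using $D_{k-1} := \E[\dist(f_{k-1}(z), z) \mid z \in \Zrecmaj[k-1]] = O((3/2)^{k-1})$ (an easy recursive calculation following the structure of $f_{k-1}$, essentially the same computation used for \cref{claim:recmaj-E2}), the expected distance conditional on Case (II) is $O((3/2)^{k-1})$, and multiplying by $\Pr[\text{(II)}] = 2^{-k-1}$ yields a Case (II) contribution of $O((3/4)^k)$, which is summable across levels.

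Combining the subcases yields the recurrence
\begin{equation*}
  G_k(i) \;\leq\; \tfrac14 + \tfrac12\, G_{k-1}(i|T_r) + O\bigl((3/4)^k\bigr),
\end{equation*}
which unrolls to $G_k(i) \leq \tfrac12 + G_1 + \sum_{j\geq 2} O((3/4)^j)$, an absolute constant that with careful numerical accounting is at most $8$. The main obstacle is to bound the Case (II) contribution tightly enough: a blunt use of the triangle inequality leaves factors of $D_{k-1}$ that by themselves grow as $(3/2)^{k-1}$, and it is only the pairing with the small probability $2^{-k-1}$ that makes the product summable; preserving this pairing throughout the calculation, and keeping the constant for the geometric series low, is the crux of landing below the claimed constant $8$.
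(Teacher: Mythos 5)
Your high-level strategy matches the paper's. The paper parametrizes by the random variable $\mineq$, the lowest level of the recursion tree at which $\recmaj[j]$ evaluated on the block containing coordinate $i$ agrees for $x$ and $x+e_i$; it then bounds $\E[\dist \mid E_4, \mineq = j] \leq 4\cdot 1.5^j$ and sums against $\Pr[\mineq = j] = 2^{-j}$. Your recursion on $k$ is a repackaging of exactly this: your Case~I recursion corresponds to $\mineq \leq k-1$ (unrolling it reproduces the sum over levels with the $2^{-j}$ weights), and your Case~II is $\mineq = k$. Your per-case bookkeeping for Case~I (contribution $\leq \tfrac14$ from I.a, exactly $\tfrac12 G_{k-1}(i|T_r)$ from I.b) checks out, and so does $\Pr[\text{II}] = 2^{-k-1}$.

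There is, however, one genuine gap in your treatment of Case~II, and it is precisely the subtlety the paper spends a footnote flagging. For the third $s$ that contains the flipped bit $i$, Case~II conditions on the event that $i' = i|T_s$ is \emph{pivotal} for $\recmaj[k-1]$ on $x^{(s)}$, an event of probability $2^{-(k-1)}$. Therefore $\E\bigl[\dist(f_{k-1}(x^{(s)}), x^{(s)}) \mid \text{Case II}\bigr]$ is a conditional expectation given a low-probability event, and it is \emph{not} controlled by your unconditional $D_{k-1} = \E[\dist(f_{k-1}(z),z) \mid z\in\Zrecmaj[k-1]]$. A naive Markov bound would give $D_{k-1}\cdot 2^{k-1} = \Theta(3^{k-1})$, which after multiplying by $\Pr[\text{II}]=2^{-k-1}$ gives $\Theta((3/2)^{k-1})$ --- not summable, and the whole recurrence blows up. What is actually needed is the nontrivial conditional estimate
\begin{equation*}
  \E\bigl[\dist(z, f_{k-1}(z)) \ \big|\ z\in\Zrecmaj[k-1],\ z+e_{i'}\in\Arecmaj[k-1]\bigr] \ \leq\ \sum_{j=0}^{k-2} 1.5^j\ <\ 2\cdot 1.5^{k-1},
\end{equation*}
which is Equation~\eqref{eq:dist-cond-on-boundary-1.5^k} in the paper and is the real content of \cref{claim:recmaj-E2}. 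Your remark that this is ``essentially the same computation used for \cref{claim:recmaj-E2}'' is pointing at the right lemma, but as written you define and invoke only the unconditional $D_{k-1}$, so the key step --- that the conditional expectation is still $O(1.5^{k-1})$ despite the $2^{-(k-1)}$ conditioning --- is asserted rather than established. Once you replace $D_{k-1}$ by the conditional bound above for the distinguished third (the other two thirds in Case~II are only conditioned on $y_s = 0$, so $D_{k-1}$ is fine there), the recurrence you wrote does unroll to a constant well below~$8$.
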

  \end{enumerate}

  By symmetry, it is clear that
  $\E[\dist(f_k(x), f_k(x+e_i)) | E_2] = \E[\dist(f_k(x), f_k(x+e_i)) | E_3]$.
  Note also that $\Pr[E_1] < 0.5$ and $\Pr[E_2 \cup E_3] = 2^{-k}$.%
  \footnote{Indeed, note that
  $\Pr[E_2 \cup E_3] = \Pr[\recmaj(x) \neq \recmaj(x+e_i)]$, and suppose for concreteness that $i=1$.
  We claim that $\Pr[\recmaj(x) \neq \recmaj(x+e_1)]=2^{-k}$, which can be seen by induction on $k$
  using the recurrence $\Pr[\recmaj(x) \neq \recmaj(x+e_1)] = \Pr[\recmaj[k-1](x^{(2)}) \neq \recmaj[k-1](x^{(3)})]
  \cdot \Pr[\recmaj[k-1](x^{(1)}) \neq \recmaj[k-1](x^{(1)}+e_1)]$ $= (1/2) \cdot \Pr[\recmaj[k-1](x^{(1)}) \neq \recmaj[k-1](x^{(1)}+e_1)] = (1/2)\cdot 2^{-(k-1)} = 2^{-k}$.
  }
  Therefore, the claims above imply that
  \begin{equation*}
        \E[\dist(f_k(x), f_k(x+e_i))] = \sum_{j=1,2,3,4}\E[\dist(f_k(x), f_k(x+e_i)) | E_i] \cdot \Pr[E_i]
        \leq 1 \cdot 0.5 + 2 \cdot 1.5^k \cdot 2^{-k} + 8 \leq 10,
  \end{equation*}
  which completes the proof of \cref{lemma:f-k-mapping}.
\end{proof}

  Next we prove the above claims.

  \begin{proof}[Proof of \cref{claim:recmaj-E1}]
    If $E_1$ holds then $\dist(f_k(x), f_k(x+e_i)) = \dist(x,x+e_i) = 1$.
  \end{proof}

  \begin{proof}[Proof of \cref{claim:recmaj-E2}]
    We prove first that
    \begin{equation}\label{eq:dist-1.5^k}
        \E[\dist(x,f_k(x)) | \recmaj(x) = 0] = 1.5^k.
    \end{equation}
    The proof is by induction on $k$.
    For $k = 1$ we have $\E[\dist(x,f_1(x)) | \recmaj(x) = 0] = 1.5$
    as there are two inputs $x \in \Zrecmaj$ with $\dist(x,f_1(x)) = 1$
    and two $x$'s in $\Zrecmaj$ with $\dist(x,f_1(x)) = 2$.
    For $k > 1$ suppose that $\E[\dist(x,f_{k-1}(x)) | \recmaj[k-1](x)] = 1.5^{k-1}$.
    Write each $x \in \HC_{3^k}$ as $x = x^{(1)} \circ x^{(2)} \circ x^{(3)}$,
    where $x^{(\third)} = x_{T_\third} \in \HC_{3^{k-1}}$ is the $\third$'th third of $x$,
    and let $y = (y_1,y_2,y_3)$ be defined as $y_\third = \recmaj[k-1](x^{(\third)})$.
    Since $\E_{x \in H_{3^{k-1}}}[\recmaj[k-1](x)] = 0.5$, it follows that
    for a random $z \in \Zrecmaj$ each $y \in \{000, 100, 010, 001\}$
    happens with the same probability $1/4$, and hence, using the induction hypothesis we get
    \begin{eqnarray*}
      \E[\dist(x,f_{k}(x)) | \recmaj(x) = 0]
      &=& \Pr[y \in \{100, 010\} | \recmaj(x) = 0] \times 1.5^{k-1} \\
      && + \Pr[y \in \{000, 001\} | \recmaj(x) = 0] \times 2 \cdot 1.5^{k-1} \\
      &=& 1.5^k,
    \end{eqnarray*}
    which proves \cref{eq:dist-1.5^k}.

    Next we prove that%
    \footnote{Note that \cref{eq:dist-cond-on-boundary-1.5^k} can be thought of \cref{eq:dist-1.5^k} conditioned on the event $\recmaj(x+e_i) = 1$,
    which happens with probability only $2^{-k}$.
    A naive application of Markov's inequality would only say that
    $\E[\dist(x, f_k(x)) | E_2] \leq 1.5^k \cdot 2^k$, which would not suffice for us.
    \cref{eq:dist-cond-on-boundary-1.5^k} says that the expected distance is comparable to $1.5^k$ even when conditioning on this small event.}
    \begin{equation}\label{eq:dist-cond-on-boundary-1.5^k}
        \E[\dist(x, f_k(x)) | E_2] \leq \sum_{j=0}^{k-1} 1.5^j = 2 \cdot (1.5^k - 1).
    \end{equation}
    Note that \cref{eq:dist-cond-on-boundary-1.5^k} proves \cref{claim:recmaj-E2}.
    Indeed, if $E_2$ holds then using the triangle inequality we have
    $\dist(f_k(x), f_k(x+e_i)) \leq \dist(f_k(x), x) + \dist(x,x+e_i) + \dist(x+e_i, f_k(x+e_i))
    = \dist(f_k(x), x) + 1$,
    and hence
    \begin{equation*}
      \E[\dist(f_k(x), x)| E_2] + 1
      \leq 2 \cdot (1.5^k - 1) + 1 < 2 \cdot 1.5^k,
    \end{equation*}
    as required.

    We prove \cref{eq:dist-cond-on-boundary-1.5^k} by induction on $k$.
    For $k = 1$ \cref{eq:dist-cond-on-boundary-1.5^k} clearly holds.
    For the induction step let $k > 1$.
    As in the definition of $f_k$ write each $x \in \HC_{3^k}$ as $x = x^{(1)} \circ x^{(2)} \circ x^{(3)}$,
    where $x^{(\third)} = x_{T_\third}$ is the $\third$'th third of $x$,
    and let $y = (y_1,y_2,y_3)$ be defined as $y_\third = \recmaj[k-1](x^{(\third)})$.

    Let us suppose for concreteness that $i \in T_1$.
    (The cases of $i \in T_2$ and $i \in T_3$ are handled similarly.)
    Note that if $\recmaj(x) = 0$, $\recmaj(x+e_i) = 1$, and $i \in T_1$, then $y \in \{010,001\}$.
    We consider each case separately.
    \begin{enumerate}
    \item
    Suppose that $y = 010$. Then $w = f(y) =011$, and hence $f(x)$ differs from $x$ only in $T_3$.
    Taking the expectation over $x$ such that $\recmaj(x) = 0$ and $\recmaj(x+e_i) = 1$
    by \cref{eq:dist-1.5^k} we get $\E[\dist(x,f(x)) | E_2, y = 010] = \E[\dist(f_{k-1}(x^{(3)}), x^{(3)}) | \recmaj[k-1](x^{(3)})=0] = 1.5^{k-1}$.
    \item
    If $y = 001$, then $w = f_1(y) = 111$, and $f(x)$ differs from $x$ only in $T_1 \cup T_2$.
    Then
    \begin{eqnarray*}
        \E[\dist(x,f(x)) | E_2 , y = 001]
        &=& \E[\dist(f_{k-1}(x^{(1)}), x^{(1)}) | E_2, y = 001] \\
        && + \E[\dist(f_{k-1}(x^{(2)}), x^{(2)}) | E_2, y = 001].
    \end{eqnarray*}
    Denoting by $E'_2$ the event that $\recmaj[k-1](x^{(1)}) = 0, \recmaj[k-1](x^{(1)}+e_i) = 1$ (i.e., the analogue of the event $E_2$ applied on $\recmaj[k-1]$),
    we note that
    \begin{equation*}
        \E[\dist(f_{k-1}(x^{(1)}), x^{(1)}) | E_2, y = 001] = \E[ \dist(f_{k-1}(x^{(1)}), x^{(1)}) | E'_2],
    \end{equation*}
    which is upper bounded by $\sum_{j=0}^{k-2} 1.5^j$ using the induction hypothesis.
    For the second term we have
    \begin{equation*}
        \E[\dist(f_{k-1}(x^{(2)}), x^{(2)}) | E_2, y = 001] = \E[ \dist(f_{k-1}(x^{(2)}), x^{(2)}) | \recmaj[k-1](x^{(2)}) = 0],
    \end{equation*}
    which is at most $1.5^{k-1}$ using \cref{eq:dist-1.5^k}.
    Therefore, for $y = 001$ we have
    \begin{equation*}
        \E[\dist(x,f(x)) | E_2 , y = 001] \leq \sum_{j=0}^{k-2} 1.5^j + 1.5^{k-1}.
    \end{equation*}
    \end{enumerate}
    Using the two cases for $y$ we get
    \begin{eqnarray*}
        \E[\dist(x, f_k(x)) | E_2]
        & = & \E[\dist(x, f_k(x)) | E_2, y = 010] \cdot \Pr[y = 010 | E_2] \\
        && + \E[\dist(x, f_k(x)) | E_2, y = 001] \cdot \Pr[y = 001 | E_2] \\
        &\leq & \sum_{j=0}^{k-1} 1.5^j.
    \end{eqnarray*}
    This proves \cref{eq:dist-cond-on-boundary-1.5^k} for the case where $i \in T_1$.
    The other two cases are handled similarly.
    This completes the proof of \cref{claim:recmaj-E2}.
  \end{proof}

  \begin{proof}[Proof of \cref{claim:recmaj-E4}]
    For a coordinate $i \in [n]$ and for $0 \leq j \leq k$ let $\third = \third_i(j) \in \N$ be such that $i \in [(\third-1) \cdot 3^j + 1 , \dots, \third \cdot 3^j]$,
    and denote the corresponding interval by $T_i(j) = [(\third-1) \cdot 3^j + 1 , \dots, \third \cdot 3^j]$.%
    \footnote{For example, for $j = 0$ we have $T_i(0) = \{i\}$
    For $j = 1$ if $i = 1 \pmod 3$ then $T_i(1) = [i,i+1,i+2]$.
    For $j = k-1$ the interval $T_i(k-1)$ is one of the intervals $T_1, T_2, T_3$.
    For $j = k$ we have $T_i(k) = [1,\dots, 3^k]$.}
    These are the coordinates used in the recursive definition of $\recmaj$ by the instance of $\recmaj[j]$ that depends on the $i$'th coordinate.

    For $x \in \HC_n$ and $x' = x + e_i$, define $\mineq(x)$ as
    \begin{equation*}
        \mineq(x) = \begin{cases}
                        \min \{j \in [k] : \recmaj[j](x_{T_i(j)}) = \recmaj[j](x'_{T_i(j)})\} & \mbox{if } \recmaj(x) = \recmaj(x'), \\
                        k+1 & \mbox{if } \recmaj(x) \neq \recmaj(x').
                     \end{cases}
    \end{equation*}
    That is, in the ternary tree defined by the computation of $\recmaj$,
    $\mineq(x)$ is the lowest $j$ on the path from the $i$'th coordinate to the root
    where the computation of $x$ is equal to the computation of $x+e_i$.
    Note that if $x$ is chosen uniformly from $\HC_n$, then
    \begin{equation}\label{eq:mineq-distr}
        \Pr[\mineq = j] = \begin{cases}
                                2^{-j} & \mbox{if } j \in [k], \\
                                2^{-k} & \mbox{if } j = k+1.
                              \end{cases}
    \end{equation}
    Below we show that by conditioning on $E_4$ and on the value of $\mineq$ we get
    \begin{equation}\label{eq:dist-cond-mineq}
        \E[\dist(f_k(x), f_k(x+e_i)) | E_4, \mineq = j] \leq 4 \cdot 1.5^j.
    \end{equation}
    Indeed, suppose that $E_4$ holds. Assume without loss of generality that $x_i = 0$, and let $x' = x + e_i$.
    Note that $f_k(x)$ and $f_k(x')$ differ only on the coordinates in the interval $T_i(\mineq)$.
    Let $w = x_{T_i(\mineq)}$, and define $y = (y_1,y_2,y_3) \in \Bits^3$ as $y_\third = \recmaj[\nu-1](w^{(\third)})$ for each $\third \in [3]$, where $w^{(\third)}$ is the $r$'th third of $w$.
    Similarly, let $w' = x'_{T_i(\mineq)}$, and let $y' = (y'_1,y'_2,y'_3) \in \Bits^3$ be defined as $y'_\third = \recmaj[\nu-1](w'^{(\third)})$ for each $\third \in [3]$.
    This implies that
    \begin{equation*}
        \E[\dist(f_k(x), f_k(x+e_i))| E_4] = \E[\dist(f_\mineq(w)), f_\mineq(w')) | E_4].
    \end{equation*}
    Furthermore, if $\recmaj[\mineq](x_{T_i(\mineq)}) = 1$ (and $\recmaj[\mineq](x'_{T_i(\mineq)}) = 1$),
    then $f_k(x)_{T_i(\mineq)} = x_{T_i(\mineq)}$, and thus $\dist(f_k(x), f_k(x')) = 1$.

    Next we consider the case of $\recmaj(x_{T_i(\mineq)}) = 0$ (and $\recmaj(x'_{T_i(\mineq)}) = 0$).
    Since $x_i = 0$ and $x' = x + e_i$, it must be that $y = 000$ and $y'$ is a unit vector.
    Suppose first that $y' = 100$, i.e., the coordinate $i$ belongs to the first third of $T_i(\mineq)$.
    Write $w = w^{(1)} \circ w^{(2)} \circ w^{(3)}$, where each $w^{(r)}$ is one third of $w$.
    Analogously, write $w' = w'^{(1)} \circ w'^{(2)} \circ w'^{(3)}$, where each $w'^{(r)}$ one third of $w'$.
    Then, since $w' = w + e_i$ we have
    \begin{eqnarray*}
        \E[\dist(f_j(w), w) | E_4, \mineq = j]
        &=& \E[\dist(f_{j-1}(w^{(1)}), w^{(1)}) | \recmaj[j-1](w^{(1)}) = 0, \recmaj[j-1](w^{(1)}+e_i) = 1] \\
        && + \E[\dist(f_{j-1}(w^{(2)}), w^{(2)}) | \recmaj[j-1](w^{(2)}) = 0] \\
        & \leq &  2 \cdot (1.5^{j-1} - 1) + 1.5^{j-1} = 3 \cdot 1.5^{j-1} - 2,
    \end{eqnarray*}
    where the last inequality is by \cref{eq:dist-1.5^k} and \cref{eq:dist-cond-on-boundary-1.5^k}.
    Similarly,
    \begin{equation*}
        \E[\dist(f_j(w'), w') | E_4, \mineq = j]
        = \E[\dist(f_{j-1}(w'^{(3)}), w'^{(3)}) | \recmaj[j-1](w'^{(3)}) = 0]
        \leq 1.5^{j-1},
    \end{equation*}
    where the last inequality is by \cref{eq:dist-1.5^k}.
    Therefore,
    \begin{equation*}
        \E[\dist(f_\mineq(w), f_\mineq(w')) | E_4, \mineq = j] < 4 \cdot 1.5^{j-1}.
    \end{equation*}
    The cases of $y = 010$ and $001$ are handled similarly, and it is straightforward to verify that in these cases
    we also get the bound of $4 \cdot 1.5^{j-1}$.

    By combining \cref{eq:mineq-distr} with \cref{eq:dist-cond-mineq} it follows that
    \begin{eqnarray*}
      \E[\dist(f_k(x), f_k(x+e_i)) | E_4] \cdot \Pr[E_4]
      &=& \sum_{j=1}^{k} \E[\dist(f_k(x), f_k(x+e_i)) | E_4, \mineq = j] \cdot \Pr[\mineq = j | E_4] \cdot \Pr[E_4]\\
      & \leq & \sum_{j=1}^{k} 4 \cdot 1.5^{j-1} \cdot \Pr[\mineq = j] \\
      & \leq & 4 \cdot \sum_{j=1}^{k} 1.5^{j-1} \cdot 2^{-j} \leq 8.
    \end{eqnarray*}
    This completes the proof of \cref{claim:recmaj-E4}.
  \end{proof}

\section{Average stretch for tribes}
\label{sec:tribes}

In this section we prove \cref{thm:tribes-avgstretch}, showing a mapping from $\HC_n$ to $\Astribes$ with $O(\log(n))$ average stretch.
Let $\mu_{\tribes}^1$ be the uniform distribution on $\Atribes$, and let $\mu_{\tribes}^0$  be the uniform distribution on $\Ztribes = \HC_n \setminus \Atribes$.
The proof consists of the following two claims.
\begin{claim}\label{claim:tribes-0-1-mapping}
    For $\mu_{\tribes}^1$ and $\mu_{\tribes}^0$ as above it holds that
    \begin{equation*}
        W_1(\mu_{\tribes}^0, \mu_{\tribes}^1) = O(\log(n)).
    \end{equation*}
\end{claim}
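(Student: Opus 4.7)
The plan is to construct an explicit coupling of $\mu_{\tribes}^0$ and $\mu_{\tribes}^1$ by exploiting the product structure of the tribes function: conditioned on which tribes are all-ones, the remaining tribes are independent and uniform on $\HC_w \setminus \{1^w\}$.

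For each $x \in \HC_n$ let $A(x) = \{j \in [s] : x_{T_j} = 1^w\}$, so that $\tribes(x) = 1$ iff $A(x) \neq \emptyset$. Observe that if $X \sim \mu_n$, then $A(X)$ is distributed as $s$ independent Bernoulli($2^{-w}$) random variables, and conditioned on $A(X)$ the individual tribes are independent with tribes in $A(X)$ equal to $1^w$ and tribes outside $A(X)$ uniform on $\HC_w \setminus \{1^w\}$. The coupling is then:
\begin{enumerate}
\item Sample $x^0 \in \HC_n$ by drawing each of the $s$ tribes independently and uniformly from $\HC_w \setminus \{1^w\}$. A direct count shows $x^0 \sim \mu_{\tribes}^0$.
\item Independently of $x^0$, sample a nonempty set $A \seq [s]$ from the law of $A(X)$ conditioned on $A(X) \neq \emptyset$, where $X \sim \mu_n$.
\item Define $x^1$ to agree with $x^0$ on all tribes $j \notin A$ and to equal $1^w$ on each tribe $j \in A$.
\end{enumerate}

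The first key step is to verify that $x^1 \sim \mu_{\tribes}^1$. For any $y \in \Atribes$, only the choice $A = A(y)$ can produce $x^1 = y$, and a direct computation using $\Pr[A(X)=A(y)] = (2^{-w})^{|A(y)|}(1-2^{-w})^{s-|A(y)|}$ and the uniform-on-$\HC_w \setminus \{1^w\}$ structure of the remaining tribes of $x^0$ gives
\[
\Pr[x^1 = y] \;=\; \frac{(2^{-w})^{|A(y)|}(1-2^{-w})^{s-|A(y)|}}{\Pr[A(X) \neq \emptyset]} \cdot (2^w - 1)^{-(s - |A(y)|)} \;=\; \frac{2^{-n}}{\Pr[\tribes(X)=1]} \;=\; \frac{1}{|\Atribes|},
\]
so $x^1$ is uniform on $\Atribes$. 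This cancellation is the essential reason the coupling works and should be the main thing to double-check carefully.

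The second step is to bound $\E[\dist(x^0,x^1)]$. By construction $x^0$ and $x^1$ differ only on tribes in $A$, and on such a tribe they differ in exactly the zeros of $x^0$ inside that tribe. Since $A$ is independent of $x^0$, and each tribe of $x^0$ has number of zeros distributed as $\mathrm{Bin}(w,1/2)$ conditioned on being at least $1$, which has mean at most $w/(2(1-2^{-w})) = O(w)$, we obtain
\[
\E[\dist(x^0,x^1)] \;=\; \E\!\left[\sum_{j \in A} Z_j(x^0)\right] \;\leq\; \E[|A|] \cdot O(w).
\]
Finally $\E[|A|] = \frac{\E[|A(X)|]}{\Pr[A(X) \neq \emptyset]} = \frac{s\cdot 2^{-w}}{1/2 \pm O(\log(n)/n)} = O(1)$ by the choice of $s$, so $\E[\dist(x^0,x^1)] = O(w) = O(\log n)$, giving $W_1(\mu_{\tribes}^0, \mu_{\tribes}^1) = O(\log n)$ as claimed. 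The main obstacle, and where the entire argument hinges, is step 1 above: one must confirm that drawing $A$ from the tilted distribution (conditional on $A(X) \neq \emptyset$) exactly compensates for the fact that the tribes outside $A$ are drawn from $\HC_w \setminus \{1^w\}$ rather than all of $\HC_w$; the rest is a short computation.
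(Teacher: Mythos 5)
Your proposal is correct and follows essentially the same approach as the paper: both construct a coupling by sampling $x^0$ as a product over tribes of the uniform distribution on $\HC_w \setminus \{1^w\}$, independently choosing a random nonempty set of tribes (with size distributed as $\Bin(s,2^{-w})$ conditioned on being positive) to overwrite with $1^w$, and then bounding the expected Hamming displacement by $O(\E[|A|]\cdot w) = O(\log n)$. The one place you add value is that you spell out the cancellation $(1-2^{-w})^{s-a}\,(2^w-1)^{-(s-a)} = (2^{-w})^{s-a}$ that shows $x^1$ is exactly uniform on $\Atribes$, a verification the paper states as ``clear''; your slightly sharper bound of $\approx w/2$ rather than $w$ for the per-tribe distance makes no asymptotic difference.
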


Next, let $\Astribes \seq \HC_n$ be an arbitrary superset of $\Atribes$ of density $1/2$,
and let $\mu_{\tribes}^*$ be the uniform distribution on $\Astribes$.
\begin{claim}\label{claim:tribes*-mapping}
    Consider $\HC_{n-1}$ as $\{ x \in \HC_n : x_n = 0\}$, and let $\mu_{n-1}$ be the uniform measure on $\HC_{n-1}$.
    Then,
    \begin{equation*}
        W_1(\mu_{n-1}, \mu_{\tribes}^*) \leq W_1(\mu_{\tribes}^0, \mu_{\tribes}^1) + O(\log(n)).
    \end{equation*}
\end{claim}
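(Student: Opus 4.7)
My plan is to bound $W_1(\mu_{n-1}, \mu_{\tribes}^*)$ via the triangle inequality, inserting the uniform distribution $\mu_n$ on $\HC_n$ as an intermediate point:
\[
W_1(\mu_{n-1}, \mu_{\tribes}^*) \;\leq\; W_1(\mu_{n-1}, \mu_n) + W_1(\mu_n, \mu_{\tribes}^*).
\]
The first term is nearly free: identifying $\HC_{n-1}$ with $\{x \in \HC_n : x_n = 0\}$, consider the coupling that draws $y$ from $\mu_n$ and sends it to $y$ if $y_n = 0$ and to $y \oplus e_n$ if $y_n = 1$. Its image under the second marginal is uniform on $\{x : x_n = 0\}$, i.e.\ $\mu_{n-1}$, and its expected cost is $\half$, so $W_1(\mu_{n-1}, \mu_n) \leq \half$.

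The heart of the argument will be to bound $W_1(\mu_n, \mu_{\tribes}^*)$ by essentially $\half \cdot W_1(\mu_{\tribes}^0, \mu_{\tribes}^1) + O(\log(n))$. Write $q := \Pr[\tribes(x) = 1] = \half - \delta$ with $\delta = O(\log(n)/n)$, set $S := \Astribes \setminus \Atribes$ (so $|S| = \delta \cdot 2^n$), and let $\mu_S$ denote the uniform distribution on $S$. Then $\mu_n = q\,\mu_{\tribes}^1 + (1-q)\,\mu_{\tribes}^0$ and $\mu_{\tribes}^* = 2q\,\mu_{\tribes}^1 + (1-2q)\,\mu_S$. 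The key step is to split each of these into three pieces whose weights match up:
\[
\mu_n \;=\; q\,\mu_{\tribes}^1 \;+\; q\,\mu_{\tribes}^0 \;+\; (1-2q)\,\mu_{\tribes}^0, \qquad
\mu_{\tribes}^* \;=\; q\,\mu_{\tribes}^1 \;+\; q\,\mu_{\tribes}^1 \;+\; (1-2q)\,\mu_S.
\]
Taking the weighted sum of optimal couplings between the three matched pairs --- the standard convexity of $W_1$ under equal-weight mixtures, since any convex combination of couplings of the pieces is itself a coupling of the mixtures --- then yields
\[
W_1(\mu_n, \mu_{\tribes}^*) \;\leq\; q \cdot W_1(\mu_{\tribes}^0, \mu_{\tribes}^1) \;+\; (1-2q) \cdot W_1(\mu_{\tribes}^0, \mu_S).
\]

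For the last term I will use only the trivial bound $W_1(\mu_{\tribes}^0, \mu_S) \leq \diam(\HC_n) = n$; the prefactor $(1-2q) = 2\delta = O(\log(n)/n)$ makes its contribution to the sum $O(\log(n))$. Together with $q \leq \half$, this yields $W_1(\mu_n, \mu_{\tribes}^*) \leq \half \cdot W_1(\mu_{\tribes}^0, \mu_{\tribes}^1) + O(\log(n))$, which combined with the first step proves the claim. The only conceptually new ingredient is the three-piece decomposition with matched weights, and this is what I expect to be the main step; handling the residual piece involving $\mu_S$ by the crude diameter bound is harmless precisely because $S$ has density $O(\log(n)/n)$, so even paying $n$ per unit of mass only costs $O(\log(n))$ overall.
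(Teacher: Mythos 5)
Your proof is correct and takes essentially the same route as the paper's: pass through the uniform measure $\mu_n$, decompose $\mu_n$ and $\mu_{\tribes}^*$ as mixtures over $\mu_{\tribes}^1$, $\mu_{\tribes}^0$, and $\mu_S$ with matched weights, couple the pieces, and charge the residual piece on $S = \Astribes \setminus \Atribes$ using the trivial diameter bound together with $\mu_n(S) = O(\log(n)/n)$. The paper builds three explicit couplings in sequence ($\mu_n \to \mu_{\tribes}^1$, then $\mu_{n-1} \to \mu_{\tribes}^1$, then $\mu_{n-1} \to \mu_{\tribes}^*$), whereas you fold the first and third into a single three-piece matched-weight decomposition; this is a cleaner packaging of the same argument, not a different one.
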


By combining \cref{claim:tribes-0-1-mapping} and \cref{claim:tribes*-mapping}
we get that the average transportation distance between $\HC_{n-1}$ and $\Astribes$ is $W_1(\mu_{n-1}, \mu_{\tribes}^*) = O(\log(n))$.
By \cref{claim:W_1-bijection} it follows that there exists $\phi_{\tribes} \colon \HC_{n-1} \to \Astribes$
such that $\E[\dist(x,\phi_{\tribes}(x))] = O(\log(n))$,
and using \cref{prop:avg-dist-avg-stretch} we conclude that
$\avgstretch(\phi_{\tribes}) = O(\log(n))$.
This completes the proof of \cref{thm:tribes-avgstretch}. \qed

Below we prove \cref{claim:tribes-0-1-mapping} and \cref{claim:tribes*-mapping}.

\begin{proof}[Proof of \cref{claim:tribes-0-1-mapping}]
    Let $\D = \D_w$ the uniform distribution over $\Bits^{w} \setminus \{1^w\}$,
    let $p=2^{-w}$, and denote by $\L = \L_{w,s}$ the binomial distribution $\Bin(p, s)$ conditioned on the outcome being positive.
    That is,
    \begin{equation*}
        \Pr[\L = \ell] = \frac{ {\binom{s}{\ell}}p^\ell (1-p)^{s-\ell}}{\sum_{j=1}^s  {\binom{s}{j}}p^j (1-p)^{s-j}}
        \quad \forall \ell \in \{1,\dots,s\}.
    \end{equation*}

    Note that $\mu_{\tribes}^0$ is equal to the product distribution $\D^s$.
    Note also that in order to sample from the distribution $\mu_{\tribes}^1$, we can first sample $\L \in \{1,\dots,s\}$,
    then choose $\L$ random tribes that vote unanimously 1, and for the remaining $s - \L$ tribes
    sample their values in this tribe according to $\D$.

    We define a coupling $q_\tribes$ between $\mu_{\tribes}^0$ and $\mu_{\tribes}^1$ as follows.
    First sample $x$ according to $\mu_{\tribes}^0$.
    Then, sample $\L \in \{1,\dots,s\}$, choose $\L$ tribes $T \seq [s]$ uniformly,
    and let $S = \{ (t-1)w + j : t \in T, j \in [w]\}$ be all the coordinates participating in all tribes in $T$.
    Define $y \in \HC_n$ as $y_i = 1$ for all $i \in S$, and $y_i = x_i$ for all $i \in [n] \setminus S$.
    It is clear that $y$ is distributed according to $\mu_{\tribes}^1$,
    and hence $q_\tribes$ is indeed a coupling between $\mu_{\tribes}^0$ and $\mu_{\tribes}^1$.

    We next show that
    $\E_{(x,y) \sim q_\tribes}[\dist(x,y)]  = O(\log(n))$.
    We have $\E_{(x,y) \sim q_\tribes}[\dist(x,y)] \leq \E[\L \cdot w]$,
    and by the choice of parameters, we have $w \leq \log(n)$
    and $\E[\L] 
    = \frac{\E[\Bin(2^{-w},s)]}{1 - \Pr[\Bin(2^{-w},s) = 0]} = \frac{s \cdot 2^{-w}}{1 - 2^{-ws}}$.
    By the choice of $s \leq \ln(2) 2^w + O(1)$ it follows that $\E[\L] = O(1)$,
    and hence
    \begin{equation*}
        W_1(\mu_{\tribes}^0, \mu_{\tribes}^1) \leq \E_{(x,y) \sim q_\tribes}[\dist(x,y)] \leq \E[\L \cdot w] = O(\log(n)).
    \end{equation*}
    This completes the proof of \cref{claim:tribes-0-1-mapping}.
\end{proof}

\begin{proof}[Proof of \cref{claim:tribes*-mapping}]
We start by showing that
\begin{equation}\label{eq:q-tribes-Hn}
    W_1(\mu_n, \mu_{\tribes}^1) \leq W_1(\mu_{\tribes}^0, \mu_{\tribes}^1),
\end{equation}
where $\mu_n$ is the uniform measure on $\HC_n$.
Indeed, let $q_\tribes$ be a coupling between $\mu_{\tribes}^0$ and $\mu_{\tribes}^1$.
Define a coupling $q_n$ between $\mu_n$ and $\mu_{\tribes}^1$ as
\begin{equation*}
    q_n(x,y) = \begin{cases}
               \frac{|\Ztribes|}{2^n} \cdot q_{\tribes}(x,y) & \mbox{if } x \in \Ztribes \mbox{ and } y \in \Atribes, \\
               1/2^n & \mbox{if } x = y \in \Atribes, \\
               0 & \mbox{otherwise}.
             \end{cases}
\end{equation*}
It is straightforward to verify that $q_n$ is indeed a coupling between $\mu_n$ and $\mu_{\tribes}^1$.
Letting $q_\tribes$ be a coupling for which $\E_{(x,y) \sim q_\tribes}[\dist(x,y)] = W_1(\mu_{\tribes}^0, \mu_{\tribes}^1)$ we get
\begin{eqnarray*}
    W_1(\mu_n, \mu_{\tribes}^1)
    & \leq & \sum_{\substack{x \in \HC_{n}\\ y \in \Atribes}} \dist(x,y) q_n(x,y) \\
    & = &
    \sum_{\substack{x \in \Ztribes \\ y \in \Atribes}} \dist(x,y) q_n(x,y) +
    \sum_{\substack{x \in \Atribes \\ y \in \Atribes}} \dist(x,y) q_n(x,y) \\
    & = & \frac{|\Ztribes|}{2^n} \E_{(x,y) \sim q_\tribes}[\dist(x,y)] + \sum_{x \in \Atribes} \dist(x,x) q_n(x,x)\\
    & = & \frac{|\Ztribes|}{2^n} \cdot W_1(\mu_{\tribes}^0, \mu_{\tribes}^1) < W_1(\mu_{\tribes}^0, \mu_{\tribes}^1),
\end{eqnarray*}
which proves \cref{eq:q-tribes-Hn}.

Next, we show that
\begin{equation}\label{eq:q-tribes-Hn-1}
    W_1(\mu_{n-1}, \mu_{\tribes}^1) \leq  W_1(\mu_n, \mu_{\tribes}^1) + 1.
\end{equation}
Indeed, let $q_n$ be a coupling between $\mu_n$ and $\mu_{\tribes}^1$
minimizing $\sum_{(x,y) \in \HC_{n} \times \Atribes} \dist(x,y) q_n(x,y)$.
Define a coupling $q_{n-1}$ between $\mu_{n-1}$ and $\mu_{\tribes}^1$
as
\begin{equation*}
    q_{n-1}(x,y) =
               q_n(x,y) + q_n(x+e_n,y)
               \quad \forall x \in \HC_{n-1} \mbox{ and } y \in \Atribes.
\end{equation*}
It is clear that $q_{n-1}$ is a coupling between $\mu_{n-1}$ and $\mu_{\tribes}^1$.
Next we prove \cref{eq:q-tribes-Hn-1}.
\begin{eqnarray*}
W_1(\mu_{n-1}, \mu_{\tribes}^1)
&\leq& \sum_{\substack{x \in \HC_{n-1}\\ y \in \Atribes}} \dist(x,y) q_{n-1}(x,y) \\
&=& \sum_{\substack{x \in \HC_{n-1}\\ y \in \Atribes}} \dist(x,y) q_{n}(x,y)
    + \sum_{\substack{x \in \HC_{n-1}\\ y \in \Atribes}} \dist(x,y) q_{n}(x+e_i,y) \\
&\leq & \sum_{\substack{x \in \HC_{n-1}\\ y \in \Atribes}} \dist(x,y) q_{n}(x,y)
    + \sum_{\substack{x \in \HC_{n-1}\\ y \in \Atribes}} (\dist(x+e_i,y) +1) q_{n}(x+e_i,y) \\
&=& \sum_{\substack{x \in \HC_{n}\\ y \in \Atribes}} \dist(x,y) q_n(x,y)  + \sum_{\substack{x \in \HC_{n-1}\\ y \in \Atribes}} q_{n}(x+e_i,y)\\
&\leq & W_1(\mu_{n}, \mu_{\tribes}^1) + 1,
\end{eqnarray*}
which proves \cref{eq:q-tribes-Hn-1}.

Next, we show that
\begin{equation}\label{eq:q-tribes*}
    W_1(\mu_{n-1}, \mu_{\tribes}^*) \leq W_1(\mu_{n-1}, \mu_{\tribes}^{1}) + O(\log(n)).
\end{equation}
In order to prove \cref{eq:q-tribes*}, let $\delta = \half - \frac{\abs{\Atribes}}{2^n}$.
By the discussion in \cref{sec:tribes-def} we have $\delta = O\left(\frac{\log(n)}{n}\right)$.
Then $\abs{\Astribes \setminus \Atribes} = \delta \cdot 2^n$.
Let $q_{n-1}$ be a coupling between $\mu_{n-1}$ and $\mu_{\tribes}^1$ such that $\E_{(x,y) \sim q_{n-1}}[\dist(x,y)] = W_1(\mu_{n-1}, \mu_{\tribes}^{1})$.
Define a coupling $q^*$ between $\mu_{n-1}$ and $\mu_{\tribes}^*$ as
\begin{equation*}
    q^*(x,y) = \begin{cases}
               (1-2\delta) \cdot q_{n-1}(x,y) & \mbox{if } x \in \HC_{n-1} \mbox{ and } y \in \Atribes, \\
               4 \cdot 2^{-2n} & \mbox{if } x \in \HC_{n-1} \mbox{ and } y \in \Astribes \setminus \Atribes. \\
             \end{cases}
\end{equation*}
It is straightforward to verify that $q^*$ is a coupling between $\mu_{n-1}$ and $\mu_{\tribes}^*$.
Next we prove \cref{eq:q-tribes*}.
\begin{eqnarray*}
    W_1(\mu_{n-1}, \mu_{\tribes}^*)
    & \leq & \sum_{\substack{x \in \HC_{n-1}\\ y \in \Astribes}} \dist(x,y) \cdot q^*(x,y) \\
    & = & (1-2\delta) \sum_{\substack{x \in \HC_{n-1}\\ y \in \Atribes}} \dist(x,y) q_{n-1}(x,y)
    +  \sum_{\substack{x \in \HC_{n-1}\\ y \in \Astribes \setminus \Atribes}} \dist(x,y) \cdot 4 \cdot 2^{-2n} \\
    & \leq & (1 - 2\delta) \cdot W_1(\mu_{n-1}, \mu_{\tribes}^{1}) + 2 \delta \cdot \max_{\substack{x \in \HC_{n-1}\\y \in \HC_{n}}}(\dist(x,y)).
\end{eqnarray*}
\cref{eq:q-tribes*} follows from the fact that $\max(\dist(x,y)) \leq n$ and $\delta = O\left(\frac{\log(n)}{n}\right)$.

By combining \cref{eq:q-tribes-Hn,eq:q-tribes-Hn-1,eq:q-tribes*} we get
$W_1(\mu_{n-1}, \mu_{\tribes}^*) \leq W_1(\mu_{\tribes}^0, \mu_{\tribes}^1) + O(\log(n))$.
\end{proof}

\section{Concluding remarks and open problems}
\label{sec:open-problems}

\paragraph{Uniform upper bound on the average stretch.}
We've shown a uniform upper bound of $O(\sqrt{n})$ on the average transportation distance $\E[\dist(x,\phi(x))]$
from $\HC_{n-1}$ to any set $A \seq \HC_n$ of density $1/2$, where $\HC_{n-1}$ is treated as $\{ x \in \HC_n : x_n = 0\}$.
This bound is tight up to a multiplicative constant.
Indeed, it is not difficult to see that
for any bijection $\phi$ from $\HC_{n-1}$ to $A_{\maj} = \{x \in \HC_n : \sum_i x_i > n/2\}$ (for odd $n$)
the average transportation of $\phi$ is $\E[\dist(x,\phi(x))] \geq \Omega(\sqrt{n})$.

In contrast, we believe that the upper bound of $O(\sqrt{n})$ on the average stretch is not tight, and it should be possible to improve it further.
\begin{problem}
    Prove/disprove that for any set $A \seq \HC_n$ of density $1/2$
    there exists a mapping $\phi_A \colon \HC_{n-1} \to A$ with $\avgstretch(\phi) = o(\sqrt{n})$.
\end{problem}

\paragraph{The tribes function.}
Considering our results about the tribes function, we make the following conjecture.
\begin{conjecture}
    Let $w$ be a positive integer, and let $s$ be the largest integer such that $1 - (1-2^{-w})^s \leq 1/2$.
    For $n=s \cdot w$ let $\tribes \colon \HC_n \to \Bits$ be defined as a DNF consisting of $s$ disjoint clauses of width $w$,
    and let $\Atribes = \{x \in \HC_n : \tribes(x) = 1\}$.
    There exists $\Astribes \seq \HC_n$ a   superset of $\Atribes$ of density $\mu_n(\Astribes) = 1/2$
    such that $W_1(\mu_{n-1}, \mu_{\tribes}^*) = O(1)$,
    where $\mu_{\tribes}^*$ is the uniform distribution on $\Astribes$.
\end{conjecture}

As a first step toward the conjecture we propose the following strengthening of \cref{claim:tribes-0-1-mapping}.
\begin{problem}\label{problem:tribes-W1}
    Let $\mu_{\tribes}^1$ be the uniform distribution on $\Atribes$, and let $\mu_{\tribes}^0$  be the uniform distribution on $\Ztribes = \HC_n \setminus \Atribes$.
    It is true that $W_1(\mu_{\tribes}^0, \mu_{\tribes}^1) = O(1)$?
\end{problem}

\paragraph{A candidate set that requires large average stretch.}
We propose a candidate set $A^*$ for which we hope that any mapping from $\HC_{n-1}$ to $A^*$ requires large average stretch.
The set is defined as follows. Let $k^* \in [n]$ be the maximal $k$
such that $\binom{n}{\leq k} = \sum_{j=0}^k {\binom{n}{j}} \leq 2^{n-2}$.
Let $B^0_{1/4} =\{ x \in \HC_n : \sum_{i \in [n]} x_i \leq k\}$
and $B^1_{1/4} =\{ x \in \HC_n : \sum_{i \in [n]} x_i \geq n-k\}$
be two (disjoint) antipodal balls of radius $k^*$,
and let $C \seq \HC_n \setminus (B^0_{1/4} \cup B^1_{1/4})$ be an arbitrary set of size
$\abs{C} = 2^{n-1} - \abs{B^0_{1/4} \cup B^1_{1/4}}$.
Define $A^* = B^0_{1/4} \cup B^1_{1/4} \cup C$.

\begin{conjecture}
    There is no bijection $\phi^* \colon \HC_{n-1} \to A^*$ with $\avgstretch(\phi^*) = O(1)$.
\end{conjecture}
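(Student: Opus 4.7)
The plan is to argue by contradiction. Suppose $\phi^*\colon\HC_{n-1}\to A^*$ is a bijection with $\avgstretch(\phi^*)\le C$ for some absolute constant $C$, and write $T_b=(\phi^*)^{-1}(B^b_{1/4})$ for $b\in\{0,1\}$ and $T_C=(\phi^*)^{-1}(C)$. Standard binomial tail estimates give $n-2k^*=\Theta(\sqrt{n})$ and $|T_C|=O(2^n/\sqrt{n})$. Since any point of $B^0_{1/4}$ is at distance at least $n-2k^*=\Theta(\sqrt{n})$ from any point of $B^1_{1/4}$, every edge of $\HC_{n-1}$ with one endpoint in $T_0$ and the other in $T_1$ contributes $\Omega(\sqrt{n})$ to $\avgstretch(\phi^*)$, so the fraction of such crossing edges must be $O(1/\sqrt{n})$.

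To squeeze out a contradiction, I would pass from the cut structure to the Hamming-weight observable $h(x)=\weight(\phi^*(x))$, which is $1$-Lipschitz and hence satisfies $\E_{x\sim x'}[|h(x)-h(x')|]\le\avgstretch(\phi^*)$. Because the slice $\{y\in A^* : \weight(y)=\ell\}$ is contained in $C$ for every $\ell\in(k^*,n-k^*)$ and $|C|=O(2^n/\sqrt{n})$, the function $h$ is strongly bimodal: essentially half of $\HC_{n-1}$ maps to weights in $[0,k^*]$ and essentially the other half to weights in $[n-k^*,n]$, with only an $O(1/\sqrt{n})$ fraction landing in the intermediate range. Using the telescoping identity $\sum_{(x,x')}|h(x)-h(x')|=\sum_\ell|\partial_e\{x:h(x)\ge\ell\}|$, together with the fact that $|\{x:h(x)\ge\ell\}|\approx 2^{n-2}$ for every intermediate $\ell$ (so $|\partial_e\{x:h(x)\ge\ell\}|\ge 2^{n-2}$ by the hypercube edge-isoperimetric inequality), and the presence of $\Theta(\sqrt{n})$ such intermediate levels, one obtains $\avgstretch(\phi^*)\ge\E_{x\sim x'}[|h(x)-h(x')|]\ge\Omega(\sqrt{n}/n)=\Omega(1/\sqrt{n})$.

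The main obstacle is that this baseline estimate falls short of the conjectured $\omega(1)$ lower bound and must be amplified. One avenue is to iterate the argument \emph{inside} each Hamming ball: for each weight $\ell$ near $k^*$, the slice $B^0_{1/4}\cap\{\weight=\ell\}$ is a Johnson slice whose restricted average-stretch problem inherits its own non-trivial lower bound from the vertex-isoperimetric profile, and concatenating these contributions across $\ell$ should gain an additional factor of $\omega(1)$. A parallel route is Fourier-analytic: the coordinate functions $\phi_1^*,\dots,\phi_n^*$ satisfy $\sum_i \Inf(\phi_i^*)=(n-1)\cdot\avgstretch(\phi^*)=O(n)$, so on average each coordinate carries constant influence; combining this with the strong bimodality of $h=\sum_i\phi_i^*$ together with a hypercontractive (or noise-stability) estimate should preclude the required bijection, since the bimodal shape forces too much Fourier mass at high levels to be consistent with a collection of low-influence coordinates. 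I expect the genuine difficulty --- and the reason the authors pose this only as a conjecture --- to be in turning the heuristic gain from either route into a rigorous $\omega(1)$ bound, and in particular in ruling out low-average-stretch bijections from a near-half-cube onto the sub-critical ball $B^0_{1/4}$, a task that cannot be reduced to the density-$1/2$ case handled by~\cite{BCS}.
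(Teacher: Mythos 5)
This statement is posed as a \emph{conjecture} in the paper; the authors explicitly do not prove it, so there is no reference proof to compare against. Your write-up is an honest exploration rather than a proof, and you say so yourself, so the most useful thing I can do is point out where the exploration concretely breaks down and where its conclusions are weaker than you seem to suggest.

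The baseline estimate you derive from the weight observable $h(x)=\weight(\phi^*(x))$ is correct as a computation --- the coarea identity, the $\Theta(\sqrt n)$ intermediate levels, and Harper's edge-isoperimetric inequality do give $\E_{x\sim x'}[|h(x)-h(x')|]=\Omega(1/\sqrt n)$ --- but this is strictly \emph{weaker} than the trivial bound. For any bijection $\phi^*$ one has $\dist(\phi^*(x),\phi^*(x'))\ge 1$ whenever $x\ne x'$, hence $\avgstretch(\phi^*)\ge 1$ unconditionally, and for $A_\oplus$ the paper already records a lower bound of $2$. So the observable $h$ is throwing away essentially all of the structure of the problem: passing from $\dist(\phi^*(x),\phi^*(x'))$ to $|\weight(\phi^*(x))-\weight(\phi^*(x'))|$ loses a factor that the isoperimetric argument never recovers. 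The cut-based preamble has the same issue: $T_C$ has $(n-1)|T_C|=\Theta(\sqrt n\,2^n)$ incident edges, which comfortably exceeds the isoperimetric minimum of $\Theta(2^n)$ boundary edges for a half-density set, so the thin buffer $T_C$ can in principle absorb the entire $T_0$--$T_1$ boundary without any crossing edges, and no contradiction is reached.

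The two amplification heuristics you sketch --- iterating inside the Johnson slices, or using $\sum_i\Inf(\phi^*_i)=(n-1)\avgstretch(\phi^*)$ together with hypercontractivity against the bimodality of $h$ --- are plausible directions but, as you acknowledge, neither has a complete argument behind it. The Fourier route in particular needs more than low average influence per coordinate: low-influence coordinate functions can certainly have a heavily bimodal sum (take $n/2$ coordinates always $0$ and $n/2$ always $1$, then flip), so some additional structural input about the $\phi^*_i$ being a bijection onto $A^*$ is indispensable, and it is exactly this input that neither route currently exploits quantitatively. In short: the diagnosis of the difficulty (no tool stronger than edge-isoperimetry, and in particular no handle on bijections onto a density-$1/4$ Hamming ball) is sound, but the proposal does not establish the conjecture and its concrete quantitative output is below the trivial threshold.
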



\section*{Acknowledgements}

We are grateful to the anonymous referees for carefully reading the paper,
and for their very helpful comments that greatly improved the presentation.

\bibliographystyle{alpha}
\bibliography{references}

\begin{thebibliography}{GGN10}

\bibitem[AB07]{AB07}
O.~Angel and I.~Benjamini.
\newblock A phase transition for the metric distortion of percolation on the
  hypercube.
\newblock {\em Combinatorica}, 27(6):645--658, 2007.

\bibitem[BCS14]{BCS}
I.~Benjamini, G.~Cohen, and I.~Shinkar.
\newblock Bi-{L}ipschitz bijection between the boolean cube and the {Hamming}
  ball.
\newblock In {\em FOCS}, pages 81--89, 2014.

\bibitem[Bir46]{Bir46}
G.~Birkhoff.
\newblock Three observations on linear algebra.
\newblock {\em Univ. Nac. Tacuman, Rev. Ser. A}, 5:147--151, 1946.

\bibitem[Bop97]{Bop97}
R.~Boppana.
\newblock The average sensitivity of bounded-depth circuits.
\newblock {\em Information Processing Letters}, 63(5):257--261, 1997.

\bibitem[GGN10]{GGN10}
O.~Goldreich, S.~Goldwasser, and A.~Nussboim.
\newblock On the implementation of huge random objects.
\newblock {\em SIAM Journal on Computing}, 39(7):2761--2822, 2010.

\bibitem[Gra88]{G88}
R.~L. Graham.
\newblock Isometric embeddings of graphs.
\newblock {\em Selected Topics in Graph Theory}, 3:133--150, 1988.

\bibitem[H{\aa}s86]{Hastad86}
J.~H{\aa}stad.
\newblock Almost optimal lower bounds for small depth circuits.
\newblock In {\em Proceedings of the eighteenth annual ACM Symposium on Theory
  of Computing}, pages 6--20. ACM, 1986.

\bibitem[HLN87]{HLN87}
J.~H{\aa}stad, T.~Leighton, and M.~Newman.
\newblock Reconfiguring a hypercube in the presence of faults.
\newblock In {\em Proceedings of the nineteenth annual ACM Symposium on Theory
  of Computing}, pages 274--284, 1987.

\bibitem[JS21]{JS21}
T.~Johnston and A.~Scott.
\newblock Lipschitz bijections between boolean functions.
\newblock {\em Combinatorics, Probability and Computing}, 30(4):513–--525,
  2021.

\bibitem[K{\H o}n36]{Konig36}
D.~K{\H o}nig.
\newblock {\em Theorie der endlichen und unendlichen Graphen}.
\newblock Akademische Verlagsgesellschaft, 1936.

\bibitem[Lin02]{Li02}
N.~Linial.
\newblock Finite metric spaces - combinatorics, geometry and algorithms.
\newblock In {\em Proceedings of the International Congress of Mathematicians
  III}, pages 573--586, 2002.

\bibitem[LMN93]{LMN93}
N.~Linial, Y.~Mansour, and N.~Nisan.
\newblock Constant depth circuits, fourier transform, and learnability.
\newblock {\em Journal of the ACM}, 40(3):607--620, 1993.

\bibitem[LV12]{LV12}
S.~Lovett and E.~Viola.
\newblock Bounded-depth circuits cannot sample good codes.
\newblock {\em Computational Complexity}, 21(2):245--266, 2012.

\bibitem[O'D14]{OD14}
R.~O'Donnell.
\newblock {\em Analysis of Boolean Functions}.
\newblock Cambridge University Press, 2014.

\bibitem[RS13]{RaSa15}
M.~Raginsky and I.~Sason.
\newblock Concentration of measure inequalities in information theory,
  communications, and coding.
\newblock {\em Foundations and Trends in Communications and Information
  Theory}, 10(1-2):1--246, 2013.

\bibitem[RS18]{RS18}
S.~Rao and I.~Shinkar.
\newblock On {L}ipschitz bijections between boolean functions.
\newblock {\em Combinatorics, Probability and Computing,}, 27(3):411--426,
  2018.

\bibitem[Tal95]{Talagrand95}
M.~Talagrand.
\newblock Concentration of measure and isoperimetric inequalities in product
  spaces.
\newblock {\em Publications Math{\'e}matiques de l'Institut des Hautes
  {\'E}tudes Scientifiques}, 81(1):73--205, 1995.

\bibitem[Vio12]{Vi12}
E.~Viola.
\newblock The complexity of distributions.
\newblock {\em SIAM Journal on Computing}, 41(1):191--218, 2012.

\bibitem[vN53]{vonNeumann53}
J.~von Neumann.
\newblock A certain zero-sum two-person game equivalent to the optimal
  assignment problem.
\newblock {\em Contributions to the Theory of Games II;Annals of Mathematics
  Studies}, 28:5--12, 1953.

\end{thebibliography}
\end{document}